\newtheorem{thm}{Theorem}[section]
\newtheorem{prop}[thm]{Proposition}
\newtheorem{lem}[thm]{Lemma}
\newtheorem{cor}[thm]{Corollary}
\newtheorem{conj}[thm]{Conjecture}
\theoremstyle{definition}
\newtheorem{Def}[thm]{Definition}
\newtheorem*{rem*}{Remark}
\newcommand{\kk}{\mathbf{k}}
\newcommand{\xx}{\mathbf{x}}
\newcommand{\Aa}{\mathcal {A}}
\newcommand{\BB}{\mathcal {B}}
\newcommand{\CC}{\mathcal {C}}
\def\w{\widetilde}
\numberwithin{equation}{section}
\begin{document}
	\title[The generic anisotropy of s.e.d. spheres]{The generic anisotropy of strongly edge decomposable spheres}
	\author[F.~Fan]{Feifei Fan}
	\thanks{The author is supported by the National Natural Science Foundation of China (Grant no. 12271183) and by the GuangDong Basic and Applied Basic Research Foundation (Grant no. 2023A1515012217).}
	\address{Feifei Fan, School of Mathematical Sciences, South China Normal University, Guangzhou, 510631, China.}
	\email{fanfeifei@mail.nankai.edu.cn}
	\subjclass[2020]{Primary 13F55; Secondary 05E40, 05E45.}
	\maketitle
	\begin{abstract}
The generic anisotropy is an important property in the study of Stanley-Reisner rings of homology spheres, which was introduced by Papadakis and Petrotou. 
This property can be used to prove the strong Lefschetz property as well as McMullen's $g$-conjecture for homology spheres. It is conjectured that for an arbitrary field $\mathbb{F}$, any $\mathbb{F}$-homology sphere is generically anisotropic over $\mathbb{F}$.
In this paper, we prove this conjecture for all strongly edge decomposable spheres.
\end{abstract}

	\section{Introduction}\label{sec:introduction}
	The conception of generic anisotropy was introduced by Papadakis and Petrotou \cite{PP20}. As shown in \cite{PP20}, the generic anisotropy implies the strong Lefschetz property (see Subsection \ref{subsec:lefschetz}), as well as the Hall-Laman
	relations defined by Adiprasito \cite{A18}, of the general Artinian reductions of the
	Stanley–Reisner rings of simplicial homology spheres in the sense of \cite[Definition 2.2]{PP20}. 
	
	Assume $\mathbb{F}$ is a field, 
	and $\Delta$ is a $\mathbb{F}$-homology sphere of dimension $d-1$ with vertex set $\{1,\dots,m\}$.
Denote by $\kk=\mathbb{F}(a_{i,j})$ the field of rational functions in the variables $a_{i,j}$, $1\leq i\leq d$, $1\leq j\leq m$. Let $\kk[\Delta]$ be the Stanley–Reisner ring of $\Delta$ over $\kk$, a quotient ring of the polynomial ring $\kk[x_1,\dots,x_m]$ (see subsection \ref{subsec:l.s.o.p.}), and let $\kk(\Delta)=\kk[\Delta]/(\theta_1,\dots,\theta_d)$, where $\theta_i$ is the linear form $\sum_{j=1}^m a_{i,j}x_{j}$. 
Then $\Delta$ is said to be \emph{generically anisotropic over $\mathbb{F}$}, if
for all integers $j$ with $1\leq 2j\leq d$ and all nonzero elements $u\in\kk(\Delta)_j$ we have $u^2\neq 0$.

It is known that strong Lefschetz property implies the celebrated $g$-conjecture characterizing the face numbers of homology spheres (see \cite{KN16}), hence
the following theorem of Papadakis and Petrotou provides a second proof of $g$-conjecture for $\mathbb{F}$-homology spheres, $\mathbb{F}$ a field of characteristic $2$. An earlier proof of the $g$-conjecture for more general rational homology spheres was given by  Adiprasito \cite{A18}.
	\begin{thm}[{Papadakis and Petrotou, \cite{PP20}}]\label{thm:PP}
		Let $\mathbb{F}$ be a field of characteristic $2$. Then all $\mathbb{F}$-homology spheres are generically anisotropic over $\mathbb{F}$.
	\end{thm}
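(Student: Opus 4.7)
The plan is to combine the Gorenstein structure of $\kk(\Delta)$ with an explicit facet-formula for its socle pairing, and then to exploit the fact that in characteristic $2$ the squaring map is additive (Frobenius). Since $\Delta$ is an $\mathbb{F}$-homology $(d-1)$-sphere, $\kk[\Delta]$ is Cohen--Macaulay Gorenstein of Krull dimension $d$ and $\theta_1,\dots,\theta_d$ form a regular sequence of linear forms, so $\kk(\Delta)$ is an Artinian Gorenstein graded $\kk$-algebra with one-dimensional socle in degree $d$. Fix a nonzero linear functional $\mathrm{vol}\colon\kk(\Delta)_d\to\kk$; then the pairing $(u,v)\mapsto\mathrm{vol}(uv)$ on $\kk(\Delta)_a\times\kk(\Delta)_{d-a}$ is perfect. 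To prove $u^2\neq 0$ for a nonzero $u\in\kk(\Delta)_j$ with $1\leq 2j\leq d$, it therefore suffices to produce some $w\in\kk(\Delta)_{d-2j}$ such that $\mathrm{vol}(u^2 w)\neq 0$.

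I would next record the facet-formula for $\mathrm{vol}$. Eliminating the variables $x_{i_1},\dots,x_{i_d}$ from the relations $\theta_1=\cdots=\theta_d=0$ for each facet $F=\{i_1,\dots,i_d\}$ of $\Delta$ should yield $\mathrm{vol}(x_{i_1}\cdots x_{i_d})=\varepsilon_F/\det M_F$ with $\varepsilon_F\in\{\pm 1\}$ and $M_F=(a_{k,i_l})_{1\leq k,l\leq d}$. Writing $u=\sum_\alpha c_\alpha x^\alpha$ with $c_\alpha\in\kk$ and the sum running over degree-$j$ monomials supported on faces of $\Delta$, Frobenius in characteristic $2$ gives $u^2=\sum_\alpha c_\alpha^{\,2}\,x^{2\alpha}$, so combined with the facet formula this expresses $\mathrm{vol}(u^2 w)$ as an $\mathbb{F}$-linear combination, with coefficients $c_\alpha^{\,2}$, of explicit rational functions in the $a_{i,j}$'s, each a sum of reciprocals $1/\det M_F$.

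The heart of the proof---and the main obstacle---is then to show that this combination cannot vanish for a suitable choice of $w$. My approach is to treat the $a_{i,j}$'s as formal indeterminates and construct a differential operator $D$ in the partial derivatives $\partial/\partial a_{i,j}$ that isolates the contribution of a single facet $F_0$ in the support of $u$. Since the determinants $\det M_F$ for distinct facets are pairwise coprime irreducible polynomials in the $a_{i,j}$'s, their reciprocals are $\mathbb{F}$-linearly independent, and in characteristic $2$ the injectivity of Frobenius on $\kk$ preserves this independence after squaring. An operator $D$ tailored to $F_0$ should annihilate every other facet term and output a nonzero multiple of $c_{\alpha_0}^{\,2}$ for a unique monomial $\alpha_0$ contributing to $F_0$; choosing $F_0$ so that $c_{\alpha_0}\neq 0$---possible because $u\neq 0$---would then give $\mathrm{vol}(u^2 w)\neq 0$. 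The genuinely delicate point, and where the characteristic-$2$ hypothesis is essential, is verifying that no unexpected cancellation arises when $D$ is applied; I expect this to reduce to a standard determinantal differentiation identity for $\det M_{F_0}$ together with the Frobenius-compatible independence above.
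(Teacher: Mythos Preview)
The paper does not prove this theorem at all: it is stated as a result of Papadakis and Petrotou and attributed to \cite{PP20}, with no argument given here. So there is no ``paper's own proof'' to compare your proposal against.

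On the substance of your sketch: the architecture you describe---Gorenstein socle pairing, the facet formula $\Psi_\Delta(\xx_F)=1/\det M_\Theta(F)$, Frobenius additivity of squaring in characteristic~$2$, and differential operators in the $a_{i,j}$---is indeed the skeleton of the argument in \cite{PP20}. However, the step you flag as ``the genuinely delicate point'' is not a detail to be filled in later but the entire content of the theorem, and your description of it is not accurate. The operator used in \cite{PP20} does not ``annihilate every other facet term''; rather, the key identity there is (roughly) that a specific differential operator applied to $\Psi_\Delta(\mu^2)$ yields a perfect square in $\kk$, from which anisotropy is extracted via injectivity of Frobenius. This is not a ``standard determinantal differentiation identity'' and occupies the bulk of \cite{PP20}. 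Your proposal also does not say how to choose $w$ when $2j<d$, and the phrase ``in the support of $u$'' is ambiguous since $u$ is a class in $\kk(\Delta)_j$, not a single facet monomial. As written, the proposal identifies the right toolbox but does not contain the key lemma.
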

	To extend this theorem to arbitrary characteristics, there is the following natural conjecture suggested by Adiprasito, Papadakis and Petrotou \cite{APP21}.
	\begin{conj}[Anisotropy conjecture]
		For an arbitrary field $\mathbb{F}$, any $\mathbb{F}$-homology sphere is generically anisotropic over $\mathbb{F}$.
	\end{conj}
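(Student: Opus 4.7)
The plan is to combine two complementary strategies: a direct symbolic computation in the spirit of Papadakis--Petrotou, and a structural reduction via combinatorial operations on homology spheres. The symbolic approach seeks an explicit expression for $u^2 \in \kk(\Delta)$ as a rational function in the generic parameters $a_{i,j}$ and the coefficients of $u$, together with a proof that this expression has a nonzero reduction in every characteristic. The structural approach seeks a list of local modifications under which generic anisotropy is preserved, so that arbitrary $\mathbb{F}$-homology spheres can be built from trivial building blocks.

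For the symbolic step I would begin by recalling the Papadakis--Petrotou formula that, in characteristic $2$, rewrites $u^2$ for a top-degree element $u$ as a sum over facets of $\Delta$ of monomials in the $a_{i,j}$ whose signs and coefficients reduce to a nonzero element because of characteristic-$2$ collapse. To extend this to arbitrary characteristic, I would work over $\mathbb{Z}$ and track the mixed (cross) terms that previously vanished. The goal is to isolate a leading term in a carefully chosen term order on the polynomial ring $\mathbb{Z}[a_{i,j}]$, whose coefficient is a concrete combinatorial quantity attached to $\Delta$ (for example a signed count of perfect matchings on some auxiliary bipartite graph, or a resultant-like determinant) that one can prove to be a nonzero integer. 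Nonvanishing in the top degree, combined with the Poincar\'e duality of $\kk(\Delta)$, would propagate to all degrees $j \leq d/2$.

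For the structural step I would ask which operations on $\Delta$ preserve generic anisotropy. The present paper establishes this for the edge contractions underlying strong edge decomposability; a natural extension is to bistellar flips in all dimensions. If flips preserve the property, then Pachner's theorem reduces the PL case to the boundary of a simplex, where anisotropy is immediate. The non-PL homology spheres, first occurring in dimension five, would have to be handled by a genuinely algebraic argument, perhaps by showing that the generic initial ideal of the Stanley--Reisner ideal depends only on the $f$-vector up to the relevant invariants, or by importing Adiprasito's biased Poincar\'e duality framework into positive characteristic.

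The principal obstacle is the loss of the Frobenius simplification when $\mathrm{char}(\mathbb{F}) > 2$: the Papadakis--Petrotou argument collapses huge sums of mixed monomials using $(x+y)^2 = x^2 + y^2$, and in odd or zero characteristic these cross terms survive and must be analyzed combinatorially. The hardest case is likely a homology sphere admitting no useful decomposition, where neither a reduction to the top face nor an induction on vertices is available. Overcoming this will almost certainly require a new invariant of $\Delta$---a combinatorial or topological quantity that controls the leading coefficient of $u^2$ independent of characteristic---and identifying this invariant, rather than executing the subsequent bookkeeping, is the true content of the conjecture.
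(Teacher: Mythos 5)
This statement is a \emph{conjecture} in the paper, not a theorem: the author explicitly notes that it is ``widely open,'' with only the dimension-$1$ case known (Papadakis--Petrotou), and the paper itself proves only the special case of strongly edge decomposable spheres (Theorem~\ref{thm:sed}). So there is no ``paper's own proof'' to compare against. More importantly, your text is not a proof of anything --- it is a research programme. You outline two strategies (a symbolic computation over $\mathbb{Z}$ tracking cross terms, and a structural reduction via local moves), but you do not carry out either one, and you yourself conclude that ``identifying this invariant \dots\ is the true content of the conjecture.'' That is an honest assessment, but it means the proposal supplies no argument to check.

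A few concrete remarks on the sketch as a plan. Your structural idea that local modifications should preserve generic anisotropy is precisely what the paper executes for the s.e.d.\ case: Proposition~\ref{prop:sed} shows that an edge contraction (under the link condition, plus anisotropy of the link and contraction and a strong Lefschetz hypothesis on the link) preserves generic anisotropy, and the real work is a delicate leading-term analysis over $\kk_0(t)$ using Lee's formula (Theorem~\ref{thm:Lee}) and a carefully tuned l.s.o.p.\ with one free parameter $t$ --- none of which appears in your outline. By contrast, your proposed reduction via bistellar flips and Pachner's theorem cannot close the conjecture: Pachner's theorem applies only to PL spheres, while the conjecture concerns arbitrary $\mathbb{F}$-homology spheres (non-PL examples exist from dimension $5$), and you acknowledge this but offer no replacement. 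Even restricted to PL spheres, you would need to prove that each type of bistellar flip preserves generic anisotropy, which is a substantially harder statement than the edge-contraction-under-link-condition result (flips can both add and remove vertices and do not come with a useful link condition); you assert this is a ``natural extension'' but give no argument. For the symbolic step, you correctly identify the loss of the Frobenius collapse $(x+y)^2 = x^2+y^2$ as the obstruction, but the proposal to find a ``leading term'' whose coefficient is a nonzero integer is exactly where Papadakis--Petrotou's method stops working in odd characteristic, and you supply no candidate term order or invariant. In short, the proposal correctly names the known obstacles but does not overcome any of them, and its structural route diverges from (and is strictly weaker than) what the paper actually proves.
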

	So far, this conjecture is widely open and only
	the special case of simplicial spheres of dimension $1$ was proved by Papadakis and Petrotou \cite{PP23}. 
	In the present paper, we will show
	that this conjecture is true for strongly edge decomposable spheres, which are a class of PL-spheres introduced by Nevo \cite{Nev07}.
	
 Let $\Delta$ be a  simplicial complex on $[m]=\{1,\dots,m\}$, a collection of subsets of $[m]$ (including $\emptyset$) that is closed under inclusion. The elements $\sigma\in\Delta$ are called \emph{faces} and
 the maximal faces of $\Delta$ under inclusion are called \emph{facets}.
 The \emph{dimension} of a face $\sigma\in\Delta$ is
 $\dim\sigma=|\sigma|-1$ ($\dim\emptyset=-1$) and the dimension of $\Delta$ is $\dim\Delta= \max\{\dim\sigma: \sigma\in\Delta\}$.
   The \emph{link} and \emph{star} of a face $\sigma\in\Delta$ are respectively the subcomplexes
	\[\begin{split}
		\mathrm{lk}_\Delta\sigma=&\{\tau\in\Delta:\tau\cup\sigma\in\Delta,\tau\cap\sigma=\emptyset\};\\
		\mathrm{st}_\Delta\sigma=&\{\tau\in\Delta:\tau\cup\sigma\in\Delta\}.
	\end{split}\]
	If $\sigma=\{i,j\}$ is an edge ($1$-dimensional face) of $\Delta$, the \emph{contraction} of $\Delta$ with respect to $\sigma$ is the simplicial complex
	$\CC(\sigma,\Delta)$ on $[m]\setminus\{i\}$ which is obtained from $\Delta$ by identifying the vertices $i$ and $j$. More precisely,
	\[\CC(\sigma,\Delta)=\{\tau\in \Delta :i\not\in\tau\}\cup\{(\tau\setminus\{i\})\cup\{j\}:i\in\tau\in\Delta\}.\]
	We say that $\Delta$ satisfies the \emph{link condition with respect to $\sigma$} if
	\[\mathrm{lk}_\Delta\{i\}\cap \mathrm{lk}_\Delta\{j\}=\mathrm{lk}_\Delta\sigma.\]
	
	\begin{Def}
		A simplcial sphere $\Delta$ is said to be \emph{strongly edge decomposable}  (s.e.d. for short) if
		either $\Delta$ is the boundary of a simplex or else there exists an edge
		$\sigma\in\Delta$ such that $\Delta$ satisfies the link 
		condition with respect to $\sigma$ and both $\mathrm{lk}_\Delta\sigma$ and $\CC(\sigma,\Delta)$ are s.e.d.
	\end{Def}
	
Murai \cite{Murai10}  proved that s.e.d. spheres have the strong Lefschetz property over any infinite field. (The strong Lefschetz property of s.e.d. spheres in characteristic zero was also proved by Babson and Nevo \cite{BN10}.) The following theorem, the main result of this paper, shows that a stronger property holds for s.e.d. spheres.
\begin{thm}\label{thm:sed}
	All	s.e.d. spheres are generically anisotropic over any field $\mathbb{F}$. 
\end{thm}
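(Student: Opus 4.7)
The proof proceeds by induction on the recursive structure of strong edge decomposability.

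For the base case $\Delta=\partial\Delta^{d}$, the Stanley--Reisner ring is $\kk[x_1,\dots,x_{d+1}]/(x_1\cdots x_{d+1})$, and its generic Artinian reduction $\kk(\Delta)$ has Hilbert function $(1,1,\dots,1)$ of length $d+1$. Such a graded Artinian $\kk$-algebra is necessarily isomorphic to $\kk[t]/(t^{d+1})$, in which the square of any nonzero element of degree $j$ with $2j\le d$ is visibly nonzero.

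For the inductive step, let $\sigma=\{i,j\}$ be an edge of $\Delta$ satisfying the link condition, and assume generic anisotropy is known for both $\mathrm{lk}_\Delta\sigma$ and $\CC(\sigma,\Delta)$. The strategy is to compare $\kk(\Delta)$ with the generic Artinian reductions of the contraction and the link via a short exact sequence of graded $\kk$-modules
\[
0\to \kk(\mathrm{lk}_\Delta\sigma)(-1)\longrightarrow M\longrightarrow \kk(\CC(\sigma,\Delta))\to 0,
\]
where $M$ is either $\kk(\Delta)$ itself, after suitable harmonization of coefficient fields and generic parameters, or a closely related graded Artinian $\kk$-algebra retaining enough structure to transfer anisotropy. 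The left map should be multiplication by the class of $\bar x_j$ under an identification $\bar x_i=\bar x_j$, and the right map a vertex-contraction projection; the link condition is precisely what prevents spurious face relations from appearing in the kernel. At the level of Hilbert polynomials, this sequence reflects the identity $h(\Delta,t)=h(\CC(\sigma,\Delta),t)+t\cdot h(\mathrm{lk}_\Delta\sigma,t)$, which serves as a sanity check.

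Granted such a sequence, the anisotropy deduction is brief. Given $u\in\kk(\Delta)_t$ with $u^2=0$ and $2t\le d$, transport $u$ to $M$ and project to $\kk(\CC(\sigma,\Delta))$; the image has vanishing square and is therefore zero by the inductive hypothesis on $\CC(\sigma,\Delta)$. Hence $u=\bar x_j v$ for some $v\in\kk(\mathrm{lk}_\Delta\sigma)_{t-1}$, and squaring gives $u^2=\bar x_j^2 v^2=\bar x_i\bar x_j\cdot v^2=0$. Injectivity of multiplication by $\bar x_i\bar x_j$ on the image of the link (another consequence of the link condition) forces $v^2=0$; the inductive hypothesis on $\mathrm{lk}_\Delta\sigma$, a $(d-3)$-sphere for which $2(t-1)\le d-2$, then gives $v=0$ and hence $u=0$.

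The principal obstacle is the precise construction of the comparison above: producing $M$, proving exactness, and matching generic parameters across the three rings. The naive vertex identification $x_i\mapsto x_j$ does not descend to a homomorphism of Stanley--Reisner rings, so the comparison must be performed after modding out the Artinian parameters, where the link condition supplies the missing compatibility. A secondary complication is that the coefficient fields used to define generic anisotropy for $\Delta$, $\CC(\sigma,\Delta)$, and $\mathrm{lk}_\Delta\sigma$ formally differ and must be harmonized via a common transcendental extension, together with a verification that anisotropy over the enlarged field entails anisotropy over the smaller original fields.
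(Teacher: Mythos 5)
Your overall strategy---induction on the s.e.d.\ structure, comparing $\kk(\Delta)$ with the Artinian reductions of the contraction and of the link---is indeed the paper's strategy at the top level, and your base case and the Hilbert-function identity $h_i(\Delta)=h_i(\CC(\sigma,\Delta))+h_{i-1}(\mathrm{lk}_\Delta\sigma)$ are correct. But the inductive step has a genuine gap exactly where you flag ``the principal obstacle,'' and the obstacle is not merely technical. There is no short exact sequence $0\to \kk(\mathrm{lk}_\Delta\sigma)(-1)\to \kk(\Delta)\to \kk(\CC(\sigma,\Delta))\to 0$ in which the right-hand map is a ring homomorphism (or even compatible with squaring): the vertex identification does not descend to Stanley--Reisner rings, and passing to the Artinian reduction does not repair this, because the obstruction lives in the quadratic part of the ideal, not in the linear parameters. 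Consequently the three key steps of your diagram chase---``the image of $u$ in $\kk(\CC(\sigma,\Delta))$ has vanishing square,'' ``hence $u=\bar x_j v$ with $v$ from the link,'' and ``$u^2=\bar x_i\bar x_j v^2$''---are all unjustified; together they presuppose that the quadratic form on $\kk(\Delta)_t$ decomposes orthogonally into the forms of the link and of the contraction, which is false in general.

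What the paper actually does to bridge this is a one-parameter degeneration: it places a single transcendental $t$ in the column of $M_\Theta$ at one endpoint of $\sigma$, chooses a basis of the \emph{middle-degree} component $\kk(\Delta)_n$ adapted to $\mathrm{st}_\Delta\sigma$ (Lemma 4.3---whose construction requires the strong Lefschetz property of $\mathrm{lk}_\Delta\sigma$, supplied by Murai's theorem; this input is entirely absent from your argument), and computes the Gram matrix of the Poincar\'e pairing on that basis via Lee's formula. The entries are rational functions of $t$ whose $t$-linear leading parts reproduce the pairing of the link and whose $t$-constant parts reproduce that of the contraction, and a leading-coefficient argument (Lemma 4.4) then extracts anisotropy of $\Delta$ from that of the two smaller complexes. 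Two further points you omit: this contraction argument works only for odd-dimensional spheres, since it reduces anisotropy to the perfect pairing in the exact middle degree, so the even-dimensional case must be handled separately by suspension (Propositions 3.2 and 4.1); and the harmonization of generic coefficient fields that you defer is precisely Theorem 3.1, which itself requires a nontrivial specialization argument rather than a routine field extension.
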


This paper is organized as follows. In Section \ref{sec:preliminaries}, we introduce the basic notions and collect some known results about Stanley–Reisner rings. In Section \ref{sec:equivalence}, we establish an equivalent but simpler condition for a homology sphere to be generically anisotropic (Theorem \ref{thm:aniso equiv}) and provide an application (Proposition \ref{prop:sus}), which are key ingredients in the proof of Theorem \ref{thm:sed}. The main idea in the proof of Theorem \ref{thm:sed} is to show that the generic anisotropy of an odd dimensional homology sphere satisfying the link condition is preserved by the contraction if its contraction and its link satisfy some nice algebraic properties (Proposition \ref{prop:sed}). The even dimensional case can be reduced to the odd dimensional case by a combinatorial result (Proposition \ref{prop:sed sus}). These results and the proof of  Theorem \ref{thm:sed} are given in Section \ref{sec:sed}.

	\section{Preliminaries}\label{sec:preliminaries}

	\subsection{The Stanley-Reisner ring}\label{subsec:l.s.o.p.}
Let $\Delta$ be a  simplicial complex on $[m]$.	For a field $\kk$, let $S=\kk[x_1,\dots,x_m]$ be the polynomial algebra with one generator for each
	vertex in $\Delta$. It is a graded algebra by setting $\deg x_i=1$.
	The \emph{Stanley-Reisner ideal} of $\Delta$ is
	\[I_\Delta:=(x_{i_1}x_{i_2}\cdots x_{i_k}:\{i_1,i_2,\dots,i_k\}\not\in\Delta)
	\]
	The \emph{Stanley-Reisner ring} (or \emph{face ring}) of $\Delta$ is the quotient \[\kk[\Delta]:=S/I_\Delta.\]
	Since $I_\Delta$ is a monomial ideal, the quotient ring $\kk[\Delta]$ is graded by degree.
	
	For a face $\sigma=\{i_1,\dots,i_k\}\in\Delta$, denote by $\xx_\sigma=x_{i_1}\cdots x_{i_k}\in\kk[\Delta]$ the face monomial corresponding to $\sigma$. Sometimes we also use $\sigma$ to mean $\xx_{\sigma}$ for simplicity.
	
A sequence $\Theta=(\theta_1,\dots,\theta_d)$ of $d=\dim\Delta+1$ linear forms in $S$ is called a \emph{linear
	system of parameters} (or \emph{l.s.o.p.} for short) if
	\[\kk(\Delta;\Theta):=\kk[\Delta]/(\Theta)\]
	has Krull dimension zero, i.e., it is a finite-dimensional $\kk$-space.
	The quotient ring $\kk(\Delta;\Theta)$ is an \emph{Artinian reduction} of $\kk[\Delta]$. We will use the simplified notation $\kk(\Delta)$ for $\kk(\Delta;\Theta)$ whenever it creates no confusion, and write the component
	of degree $i$ of $\kk(\Delta)$ as $\kk(\Delta)_i$. For a subcomplex $\Delta'\subset\Delta$, let $I$ be the ideal of $\kk[\Delta]$ generated by faces in $\Delta\setminus\Delta'$, and denote $I/I\Theta$ by $\kk(\Delta,\Delta';\Theta)$ or simply $\kk(\Delta,\Delta')$.
	
	A linear sequence $\Theta=(\theta_1,\dots,\theta_d)$ is an l.s.o.p if and only if the restriction $\Theta_\sigma=r_\sigma(\Theta)$ to each face $\sigma\in\Delta$ generates the polynomial algebra $\kk[x_i:i\in\sigma]$; here $r_\sigma:\kk[\Delta]\to\kk[x_i:i\in\sigma]$ is the projection homomorphism. If $\Theta$ is an l.s.o.p. for $\kk[\Delta]$, then $\kk(\Delta)$ is spanned by the face monomials (see \cite[Theorem 5.1.16]{BH98}). If $\kk$ is an infinite field, then $\kk[\Delta]$ admits an l.s.o.p. (Noether normalization lemma). 
	
	A simplicial complex $\Delta$ is called \emph{Cohen-Macaulay over $\kk$} if for any l.s.o.p $\Theta=(\theta_1,\dots,\theta_d)$, $\kk[\Delta]$ is a free $\kk[\theta_1,\cdots,\theta_d]$ module. By a result of Reisner \cite{Rei76}, $\Delta$ is Cohen-Macaulay over $\kk$ if and only if for all faces $\sigma\in\Delta$ (including $\sigma=\emptyset$)
	and $i<\dim\mathrm{lk}_\Delta\sigma$, we have $\w H_i(\mathrm{lk}_\Delta\sigma;\kk)=0$. It follows from \cite[II. Corollary 2.5]{S96} that if $\Delta$ is a Cohen-Macaulay (over $\kk$) complex of dimenison $d-1$, then for any Artinian reduction $\kk(\Delta)$, $\dim_\kk\kk(\Delta)_i=h_i(\Delta)$, $0\leq i\leq d$,
	where $h_i(\Delta)$ is a combinatorial invariant of $\Delta$ defined by
	\[h_i(\Delta)=\sum_{j=0}^{i}(-1)^{i-j}\binom{d-j}{d-i}f_{j-1},\]
	and $f_i$ is the number of $i$-dimensional faces of $\Delta$.
	
	Suppose that $\Theta=(\theta_i=\sum_{j=1}^m a_{i,j}x_j)_{i=1}^d$ is an l.s.o.p. for $\kk[\Delta]$. Then there is an associated $d\times m$ matrix $M_\Theta=(a_{i,j})$.
Let $\boldsymbol{\lambda}_i=(a_{1,i},a_{2,i},\dots,a_{d,i})^T$ denote the column vector corresponding to the vertex $i\in[m]$. For any ordered subset $I=(i_1,\dots,i_k)\subset [m]$, the submatrix $M_\Theta(I)$ of $M_\Theta$ is defined to be
\[M_\Theta(I)=(\boldsymbol{\lambda}_{i_1},\dots,\boldsymbol{\lambda}_{i_k}).\]
	
	\subsection{Strong Lefschetz and Anisotropy}\label{subsec:lefschetz}
	The Lefschetz property of face rings is strongly connected to many topics in algebraic geometry, commutative algebra and combinatorics. For instance, the strong Lefschetz property for homology spheres is the algebraic version of the $g$-conjecture in the most strong sence, a generalization of the Hard Lefschetz Theorem in algebraic geometry for projective toric variaties. 
	
	A $(d-1)$-dimensional simplicial complex
	$\Delta$ is a \emph{$\kk$-homology sphere}, $\kk$ a field, if for every face $\sigma\in\Delta$ (including $\sigma=\emptyset$) the link of $\sigma$ has
	the same reduced homology as a sphere of dimension $d-2-\dim\sigma$:
	\[\w H_i(\mathrm{lk}_\Delta\sigma;\kk)=\begin{cases}
		\kk&\text{if } i=d-2-\dim\sigma,\\
		0&\text{otherwise.}
	\end{cases}.\]
 It is known that if $\Delta$ is a $\kk$-homology $(d-1)$-sphere, then the face ring $\kk[\Delta]$ is a \emph{Gorenstein ring} (see \cite[II. Theorem 5.1]{S96}), i.e., $\kk(\Delta;\Theta)$ is a Poincar\'e duality $\kk$-algebra for any l.s.o.p $\Theta$ in the sence that the multiplication map $\kk(\Delta;\Theta)_i\times \kk(\Delta;\Theta)_{d-i}\to \kk(\Delta;\Theta)_d\cong\kk$
 is a perfect pairing for $0\leq i\leq d$. (See \cite[p. 50]{S96} for other equivalent definitions.)
	
	\begin{Def}\label{def:SL}
		Let $\Delta$ be a $\kk$-homology $(d-1)$-sphere. We say that $\kk[\Delta]$ has the \emph{strong Lefschetz property}  (or $\Delta$ is strong Lefschetz over $\kk$) 
		if there is an Artinian reduction $\kk(\Delta;\Theta)$ of $\kk[\Delta]$ and a linear form
		$\omega\in\kk[\Delta]_1$ such that  the multiplication map
	\[\cdot\omega^{d-2i}:\kk(\Delta;\Theta)_{i}\to\kk(\Delta;\Theta)_{d-i}\]
 is bijective for $i=0,1,\dots,\lfloor\frac{d}{2}\rfloor$. The element $\omega$ is called a \emph{strong Lefschetz element} of $\kk(\Delta;\Theta)$.
	\end{Def}

Note that the set of $(\Theta,\omega)$ in Definition \ref{def:SL} is
Zariski open (see e.g. \cite[Propostion 3.6]{Swa06}), but it may be empty. 
If $\Delta$ is strong Lefschetz over $\kk$, then it is also strong  Lefschetz over any infinite field with the same characteristic as $\kk$  (see e.g. \cite[Propostion 13.6]{PP20}).

As we mentioned in Section \ref{sec:introduction}, in order to prove the strong Lefschetz property of homology spheres in characteristic $2$, Papadakis and Petrotou \cite{PP20} established the anisotropy of their face rings over a large field.

\begin{Def}\label{def:anisotropy}
	Let  $\Delta$ be a $\kk$-homology $(d-1)$-sphere. An Artinian reduction $\kk(\Delta)$ of $\kk[\Delta]$ is said to be \emph{anisotropic} if for every nonzero element $u\in\kk(\Delta)_i$ with $i\leq d/2$, the square $u^2$ is also nonzero in $\kk(\Delta)_{2i}$. We call $\Delta$ \emph{anisotropic over $\kk$} if such an Artinian reduction exists.
\end{Def}

It turns out that anisotropy is stronger than the strong Lefschetz property in the sence that if a class of $\kk$-homology spheres, which is closed under the suspension operation, is anisotropic over $\kk$, then any $\kk$-homology sphere in this class is strong Lefschetz (see \cite[Section 9]{PP20}). 
Here the suspension $S(\Delta)$ of a simplicial complex $\Delta$ means the join $S^0*\Delta$, where $S^0$ is the simplicial complex consisting of two disjoint vertices. Recall that the join of two simplicial complexes $\Delta$ and $\Delta'$ with disjoint sets of vertices is
$\Delta*\Delta':=\{\sigma\cup\tau : \sigma\in\Delta,\,\tau\in\Delta'\}$. 
	
\subsection{Canonical modules for homology balls} In this subsection we recall some results about the \emph{canonical module} (see \cite[I.12]{S96} for the definition) of $\kk[\Delta]$ when $\Delta$ is a $\kk$-homology ball. 

First we recall the definition of $\kk$-homology manifold. A $d-1$ dimensional simplicial complex $\Delta$ is a \emph{$\kk$-homology manifold (without boundary)} if
for any nonempty faces $\sigma\in\Delta$, $\mathrm{lk}_\Delta\sigma$ is a $\kk$-homology sphere of dimension $d-2-\dim\sigma$.
Similarly, $\Delta$ is a \emph{$\kk$-homology manifold with boundary} if the link of every nonempty face $\sigma$ has the homology of a $(d-2-\dim\sigma)$-dimensional sphere or a ball (over $\kk$), and the
boundary complex of $\Delta$, i.e.,
\[\partial\Delta:=\{\sigma\in\Delta:\w H_*(\mathrm{lk}_\Delta\sigma;\kk)=0\}\cup\{\emptyset\}\]
is a $(d-2)$-dimensional $\kk$-homology manifold without boundary. 
A connected $\kk$-homology manifold $\Delta$ of dimension $d-1$ is \emph{orientable} if $\w H_{d-1}(\Delta,\partial\Delta;\kk)\cong\kk$. In this case, an orientation of $\Delta$ is given by an ordering on the vertices of each facets of $\Delta$.

A $\kk$-homology $(d-1)$-ball is a $(d-1)$-dimensional $\kk$-homology manifold with boundary whose homology is trivial and whose
boundary complex is a $\kk$-homology $(d-2)$-sphere.

Let $\Delta$ be a $\kk$-homology $(d-1)$-ball with boundary $\partial \Delta$. Then there is an exact sequence
\begin{equation}\label{eq:exact}
	0\to I\to \kk[\Delta]\to\kk[\partial\Delta]\to 0,
\end{equation}
where $I$ is the ideal of $\kk[\Delta]$ generated by all faces in $\Delta\setminus\partial\Delta$. By a theorem of Hochster (see \cite[II. Theorem  7.3]{S96}) $I$ is the canonical module of $\kk[\Delta]$. Then from \cite[Theorem 3.3.4 (d) and Theorem 3.3.5 (a)]{BH98} we have the following
\begin{prop}\label{prop:pairing}
	Let $\Delta$ be a $\kk$-homology $(d-1)$-ball. Then for any Artinian reduction, the multiplication map
	\[
	\kk(\Delta)_i\times \kk(\Delta,\partial\Delta)_{d-i}\to \kk(\Delta,\partial\Delta)_d\cong\kk
	\]
is a perfect pairing for $0\leq i\leq d$.
\end{prop}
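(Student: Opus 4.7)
The proof is essentially a direct invocation of canonical module theory, building on the two ingredients already recalled in the statement. First, since $\Delta$ is a $\kk$-homology $(d-1)$-ball, Reisner's criterion applies: links of faces in a homology ball have reduced homology concentrated in the top degree, so $\kk[\Delta]$ is Cohen--Macaulay of Krull dimension $d$. By Hochster's theorem (already cited), the ideal $I \subset \kk[\Delta]$ generated by the interior faces of $\Delta$ is then the canonical module $\omega_{\kk[\Delta]}$.

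Next, fix an l.s.o.p.\ $\Theta = (\theta_1,\ldots,\theta_d)$ for $\kk[\Delta]$. Because $\kk[\Delta]$ is Cohen--Macaulay, $\Theta$ is a regular sequence on $\kk[\Delta]$, and since the canonical module of a Cohen--Macaulay module is again Cohen--Macaulay of the same dimension, $\Theta$ is also a regular sequence on $I$. The base-change statement \cite[Theorem 3.3.4(d)]{BH98} for canonical modules under a regular sequence then yields
\[
\omega_{\kk(\Delta)} \;\cong\; I/\Theta I \;=\; \kk(\Delta,\partial\Delta).
\]

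Finally, I would appeal to \cite[Theorem 3.3.5(a)]{BH98}, which asserts that for any Artinian graded $\kk$-algebra $A$ with canonical module $\omega_A$ and socle in top degree $d$, the multiplication pairing $A_i \times (\omega_A)_{d-i} \to (\omega_A)_d \cong \kk$ is perfect; this is the graded reformulation of the fact that $\omega_A$ is the Matlis dual of $A$. Specializing to $A = \kk(\Delta)$ with $\omega_A = \kk(\Delta,\partial\Delta)$ gives precisely the claimed perfect pairing. There is no serious obstacle: all the substantive input has been done by Hochster's identification of the canonical module together with the general duality theory for Cohen--Macaulay graded algebras, and the proposition is an immediate specialization of these two tools to the homology-ball setting.
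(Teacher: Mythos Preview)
Your proposal is correct and follows exactly the route indicated in the paper: the proposition is stated there without a written proof, merely as a direct consequence of Hochster's identification of the canonical module together with \cite[Theorem 3.3.4(d) and Theorem 3.3.5(a)]{BH98}, and you have simply spelled out these steps. There is nothing to add.
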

This result has the following useful corollary.

	\begin{cor}\label{cor:exact}
	Let $K$ be a $\kk$-homology sphere and $\Delta\subset K$ be a $\kk$-homology ball of the same dimension. Then for any Artinian reduction $\kk(K)$ and its restriction to $\Delta$, there is a short exact sequence 
\[0\to\kk(K,\Delta)\to\kk(K)\to\kk(\Delta)\to0.\]
\end{cor}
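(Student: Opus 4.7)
The plan is to deduce the statement from the short exact sequence of graded $\kk[K]$-modules
\[
0 \to I \to \kk[K] \to \kk[\Delta] \to 0,
\]
in which $I$ denotes the ideal of $\kk[K]$ generated by the faces in $K\setminus\Delta$. The identification of the quotient $\kk[K]/I$ with $\kk[\Delta]$ is immediate: the generators of $I$ are precisely the face monomials of $\kk[K]$ that need to be killed to pass to $\kk[\Delta]$, so modding them out recovers the Stanley-Reisner ring of $\Delta$. The desired sequence should then come from tensoring the above over $\kk[K]$ with $\kk[K]/(\Theta)=\kk(K)$.

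To see that this tensoring preserves exactness, I would verify that $\Theta$ is a regular sequence on both $\kk[K]$ and $\kk[\Delta]$. The former is automatic because $K$ is Cohen-Macaulay. For the latter, every face of $\Delta$ is a face of $K$, so by the restriction criterion in Subsection \ref{subsec:l.s.o.p.} the sequence $\Theta$ is still an l.s.o.p.\ for $\kk[\Delta]$; moreover $\Delta$ is Cohen-Macaulay by Reisner's criterion, since every link in a homology ball is itself a homology ball or homology sphere of lower dimension and hence has the required vanishing of reduced homology. Consequently, the Koszul complex on $\Theta$ is a free resolution of $\kk(K)$ over $\kk[K]$, and its tensor product with $\kk[\Delta]$ is the Koszul complex of $\Theta$ acting on $\kk[\Delta]$, which is acyclic in positive degrees. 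It follows that $\mathrm{Tor}_i^{\kk[K]}\bigl(\kk(K),\kk[\Delta]\bigr)=0$ for every $i\geq 1$.

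Feeding the vanishing of $\mathrm{Tor}_1$ into the long exact Tor sequence, together with the identifications $I\otimes_{\kk[K]}\kk(K)=I/I\Theta=\kk(K,\Delta)$ and $\kk[\Delta]\otimes_{\kk[K]}\kk(K)=\kk(\Delta)$, yields the claimed short exact sequence. The only substantive point to check is the Cohen-Macaulayness of the homology ball $\Delta$, which underwrites the regularity of $\Theta$ on $\kk[\Delta]$; the remaining steps are standard homological algebra, and I do not foresee a real obstacle.
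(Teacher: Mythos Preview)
Your argument is correct and is the standard route to this fact: the short exact sequence $0\to I\to\kk[K]\to\kk[\Delta]\to0$ stays exact after tensoring with $\kk[K]/(\Theta)$ because $\mathrm{Tor}_1^{\kk[K]}(\kk(K),\kk[\Delta])$ vanishes, and your Koszul-complex justification of this vanishing (using that $\Theta$ is a regular sequence on the Cohen--Macaulay ring $\kk[\Delta]$) is exactly right. The paper does not spell out a proof but simply refers to the opening paragraph of the proof of \cite[Theorem~3.1]{Swa14}, which carries out essentially the same computation; so your approach matches the intended one.

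One minor remark: although the paper positions the statement as a ``corollary'' of Proposition~\ref{prop:pairing}, your proof shows that the perfect pairing is not actually needed here---only the Cohen--Macaulayness of the homology ball $\Delta$ is used. That is not a defect; the same is true of the referenced argument in \cite{Swa14}.
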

\begin{proof}
	Cf. the first paragraph of the proof of \cite[Theorem 3.1]{Swa14}.
\end{proof}

\subsection{Lee's formula}\label{subsec:Lee's} 
In this subsection, we introduce a formula due to Lee that expresses non-square-free monomials in $\kk(\Delta)$ in terms of face monomials. First, we recall a useful result in \cite{PP20}.

\begin{lem}[{\cite[Corollary 4.5]{PP20}}]\label{lem:generator}
	Let $\Delta$ be a $(d-1)$-dimensional $\kk$-homology sphere or ball, $\Theta$ be an l.s.o.p. for $\kk[\Delta]$. Suppose that $\sigma_1$ and $\sigma_2$ are two ordered facets of $\Delta$, which have the same orientation in $\Delta$. Then \[\det(M_\Theta(\sigma_1))\cdot\xx_{\sigma_1}=\det(M_\Theta(\sigma_2))\cdot\xx_{\sigma_2}\] in $\kk(\Delta)_d$ or in $\kk(\Delta,\partial\Delta)_d$.
\end{lem}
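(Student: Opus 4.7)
My plan is to prove the identity first for pairs of facets that share a ridge (a $(d-2)$-face) and then extend it to arbitrary facets via strong connectedness. Since a $\kk$-homology sphere or ball is a pseudomanifold, the facet-adjacency graph is connected, so any two ordered facets $\sigma_1, \sigma_2$ are joined by a sequence $\sigma_1 = \tau_0, \tau_1, \ldots, \tau_n = \sigma_2$ in which each consecutive pair $\tau_k, \tau_{k+1}$ shares a ridge, and one can moreover choose the intermediate facets to carry orderings that agree with a fixed orientation of $\Delta$. In the ball case, we can further arrange that all intermediate ridges are interior, which follows from the pseudomanifold property of $\Delta \setminus \partial\Delta$.

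Now fix adjacent facets $\sigma_1, \sigma_2$ sharing an interior ridge $\rho$, so $\sigma_1 = \rho \cup \{i\}$ and $\sigma_2 = \rho \cup \{j\}$. Since $\mathrm{lk}_\Delta \rho$ is a $\kk$-homology $0$-sphere, the only vertices $l \notin \rho$ with $\rho \cup \{l\} \in \Delta$ are $i$ and $j$. Multiplying the defining relation $\theta_k = \sum_l a_{k,l} x_l \equiv 0 \pmod{(\Theta)}$ by $\xx_\rho$ therefore yields, in $\kk(\Delta)_d$ for the sphere case and in $\kk(\Delta, \partial\Delta)_d$ for the ball case,
\[a_{k,i}\, \xx_{\sigma_1} + a_{k,j}\, \xx_{\sigma_2} + \sum_{l \in \rho} a_{k,l}\, x_l \xx_\rho = 0, \qquad k = 1, \ldots, d.\]
By the Gorenstein property in the sphere case and by Proposition \ref{prop:pairing} in the ball case, the target space is $1$-dimensional over $\kk$; choosing a generator $c$, each of $\xx_{\sigma_1}, \xx_{\sigma_2}, x_l \xx_\rho$ is a scalar multiple of $c$, and the resulting coefficient vector lies in the kernel of the $d \times (d+1)$ matrix with columns $\boldsymbol{\lambda}_i, \boldsymbol{\lambda}_j, \boldsymbol{\lambda}_{l_1}, \ldots, \boldsymbol{\lambda}_{l_{d-1}}$ (where $\rho = \{l_1, \ldots, l_{d-1}\}$). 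Any $d$ of these columns form an $M_\Theta$ matrix of a facet and hence are linearly independent by the l.s.o.p.\ property; so this matrix has rank $d$, and its kernel is spanned by the signed maximal-minor vector given by Cramer's rule. Reading off the first two coordinates yields a relation of the form $\det M_\Theta(\{i\} \cup \rho)\, \xx_{\sigma_1} = -\det M_\Theta(\{j\} \cup \rho)\, \xx_{\sigma_2}$.

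It remains to convert this identity into the stated form by reconciling the sign with the orientation data. With the convention that orderings of $\sigma_1, \sigma_2$ represent the same orientation of $\Delta$ precisely when they induce opposite orientations on the shared ridge $\rho$, the sign from Cramer's rule is absorbed into the two determinants and one obtains $\det M_\Theta(\sigma_1)\, \xx_{\sigma_1} = \det M_\Theta(\sigma_2)\, \xx_{\sigma_2}$. For non-adjacent facets we iterate this across the path from the first step, propagating an orientation-consistent ordering at each stage. The principal obstacle is precisely this sign bookkeeping: one must check that the sign produced by Cramer's rule for a $d \times (d+1)$ matrix, the sign from permuting columns to match the prescribed orderings of $\sigma_1$ and $\sigma_2$, and the topological definition of ``same orientation'' combine to produce the stated equality rather than its negation. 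Beyond this, the argument is essentially linear algebra on top of the Gorenstein/canonical-module structure recalled in Section \ref{sec:preliminaries}.
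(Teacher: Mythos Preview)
The paper does not supply its own proof of this lemma; it is quoted as \cite[Corollary~4.5]{PP20} and used as a black box. Your argument---reduce to adjacent facets via strong connectedness, then multiply each $\theta_k$ by the ridge monomial $\xx_\rho$ and read off the coefficient relation from the one-dimensionality of the top graded piece---is the standard one and is correct. It is in fact essentially how the result is obtained in \cite{PP20}, so there is no meaningful methodological difference to discuss.

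One small slip: your claim that ``any $d$ of these columns form an $M_\Theta$ matrix of a facet'' is false, since $\{i,j\}\cup(\rho\setminus\{l\})$ need not be a face of $\Delta$. This does no damage, however: you only need the $d\times(d+1)$ matrix to have rank $d$, and that already follows from the single facet $\sigma_1=\{i\}\cup\rho$ via the l.s.o.p.\ condition. The rest of the sign bookkeeping you outline is routine, and your remark about restricting to interior ridges in the ball case (so that $\xx_\rho$ and hence all $x_l\xx_\rho$ lie in the ideal defining $\kk(\Delta,\partial\Delta)$) is exactly what is needed there.
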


When $\Delta$ is a $(d-1)$-dimensional $\kk$-homology sphere or ball, $\kk(\Delta)_d$ or $\kk(\Delta,\partial\Delta)_d$ is  $\kk$, which is spanned by a facet monomial. So each facet $\sigma\in\Delta$ defines a map \[\Psi_\sigma:\kk(\Delta)_d\ \text{ or }\  \kk(\Delta,\partial\Delta)_d\to \kk\]
such that for all $\alpha$ in $\kk(\Delta)_d$ or $\kk(\Delta,\partial\Delta)_d$,
\[\alpha=\Psi_\sigma(\alpha)\det(M_\Theta(\sigma))\xx_{\sigma}.\]
Lemma \ref{lem:generator} says that $\Psi_\sigma=\pm\Psi_\tau$ for  any two facets $\sigma,\tau\in\Delta$.
If we fix an orientation on $\Delta$, this map is independent of the choice of the ordered facet whose ordering is compatible with the given orientation on $\Delta$, defining a \emph{canonical function} $\Psi_\Delta:\kk(\Delta)_d\text{ or } \kk(\Delta,\partial\Delta)_d\to \kk$ (see \cite[Remark 4.6]{PP20}). In particular, if $\sigma$ is a facet of $\Delta$, then $\Psi_\Delta(\xx_\sigma)=1/\det(M_\Theta(\sigma))$.

To state Lee's formula, we will need the following notation. Under the assumption of Lemma \ref{lem:generator}, let $\mathbf{a}=(a_1,\dots,a_d)^T\in\kk^d$ be a vector such that every $d\times d$ minor of the  matrix $(M_\Theta\mid\mathbf{a})$ is nonsingular. For any ordered subset $I\subset[m]$ with $|I|=d$, let $A_I=\det(M_\Theta(I))$, and for any $i\in I$, denote by $A_I(i)$ the determinant of the matrix obtained from $M_\Theta(I)$ by replacing the column vector $\boldsymbol{\lambda}_i$ with $\mathbf{a}$.

\begin{thm}[Lee's formula {\cite[Theorem 11]{Lee96}}]\label{thm:Lee}
	Let $\Delta$ be a $(d-1)$-dimensional $\kk$-homology sphere (resp. $\kk$-homology  ball), and fix an orientation on $\Delta$. Then for a monomial $x_{i_1}^{r_1}\cdots x_{i_k}^{r_k}\in\kk(\Delta)_d$ (resp.  $x_{i_1}^{r_1}\cdots x_{i_k}^{r_k}\in\kk(\Delta,\partial \Delta)_d$), $r_i>0$, we have
	\[\Psi_\Delta(x_{i_1}^{r_1}\cdots x_{i_k}^{r_k})=\sum_{\text{facets } F\in\mathrm{st}_\Delta\sigma}\frac{\prod_{i\in\sigma}A_F(i)^{r_i-1}}{A_F\prod_{i\in F\setminus\sigma}A_F(i)},\]
	where $\sigma=\{i_1,\dots,i_k\}$ and the sum is over all ordered facets of $\mathrm{st}_\Delta\sigma$, whose orderings are compatible with the given orientation on $\Delta$.
\end{thm}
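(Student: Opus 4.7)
My plan is to interpret the right-hand side of the asserted formula as a candidate $\kk$-linear functional $T_\Delta$ on $\kk[x_1,\ldots,x_m]_d$, verify that $T_\Delta$ descends to $\kk(\Delta)_d$ (respectively $\kk(\Delta,\partial\Delta)_d$ in the ball case), and match it with $\Psi_\Delta$ on a single facet monomial. Since the target is one-dimensional, this forces $T_\Delta=\Psi_\Delta$ identically. The facet case is immediate: when $\sigma$ itself is a facet $F$ with every $r_i=1$, the only summand is $F$, both products in the formula are empty, and the right-hand side collapses to $1/A_F=\Psi_\Delta(\xx_F)$.

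Well-definedness requires that $T_\Delta$ vanish on the Stanley--Reisner relations (automatic, since $\sigma\notin\Delta$ gives an empty star) and on $\theta_l\cdot m$ for every degree-$(d-1)$ monomial $m=\xx_\tau^{\mathbf{s}}$ with $\sum s_i=d-1$. Writing $\theta_l m=\sum_v a_{l,v}x_v m$ and splitting $v\in\tau$ (which raises the exponent on $v$ keeping the support $\tau$) from $v\in\mathrm{lk}_\Delta\tau$ (which enlarges the support to $\tau\cup\{v\}$), substitution into Lee's formula yields a contribution from each facet $F\in\mathrm{st}_\Delta\tau$ equal to
\[
\frac{P_F}{A_F\,Q_F}\sum_{v\in F}a_{l,v}A_F(v),\qquad P_F=\prod_{i\in\tau}A_F(i)^{s_i-1},\ Q_F=\prod_{j\in F\setminus\tau}A_F(j),
\]
because $x_v\,\xx_\tau^{\mathbf{s}}$ has support contained in $F$ precisely when $v\in F$. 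Cramer's rule applied to $M_\Theta(F)\mathbf{y}=\mathbf{a}$ evaluates the inner sum as $A_F a_l$, so the $F$-contribution collapses to $a_l P_F/Q_F$, and well-definedness reduces to the combinatorial identity
\[
\sum_{F\in\mathrm{st}_\Delta\tau}\frac{P_F}{Q_F}=0\qquad\text{whenever }\tau\text{ is not a facet.}
\]

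The final identity is the main obstacle. Each summand $P_F/Q_F$ is antisymmetric under reversal of the orientation of $F$ (both $P_F$ and $Q_F$ pick up signs of different parities), so the sum is an invariant of the oriented $\kk$-homology ball $\mathrm{st}_\Delta\tau$. I would prove the vanishing by induction on $d-|\tau|$: the base case $|\tau|=d-1$ reduces to two facets sharing $\tau$, whose orientations inherited from $\Delta$ produce an exact sign cancellation (as one checks, e.g., for the triangle boundary by hand); the inductive step telescopes pairs of facets across interior codimension-one faces of $\mathrm{st}_\Delta\tau$, matching residues along the hyperplanes $\{A_F(v)=0\}$ and identifying the remaining boundary contribution with an analogous identity on the lower-dimensional homology sphere $\partial\mathrm{st}_\Delta\tau=\mathrm{lk}_\Delta\tau$. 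In the ball case, Proposition~\ref{prop:pairing} and Corollary~\ref{cor:exact} let the same argument run in $\kk(\Delta,\partial\Delta)$. The most delicate point will be the orientation bookkeeping in the telescoping step; an alternative that bypasses the induction is to recognize $\sum_F P_F/Q_F$ as a global residue of a meromorphic form on the toric variety attached to $\mathbf{a}$ and the $\boldsymbol{\lambda}_v$, and then invoke the global residue theorem.
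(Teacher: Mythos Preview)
The paper does not prove this theorem; it is stated as a known result from Lee's 1996 paper, so there is no in-paper proof to compare against.

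Your overall strategy is sound and is the standard route to such formulas: define the right-hand side as a linear functional $T_\Delta$ on degree-$d$ monomials, show it annihilates $I_\Delta$ (trivially, via the empty star) and the ideal $(\Theta)$, then match with $\Psi_\Delta$ on one facet monomial. Your Cramer's-rule reduction of $T_\Delta(\theta_l m)=0$ to the identity $\sum_{F\supset\tau}P_F/Q_F=0$ is correct, as is your sign check that each summand changes sign under orientation reversal of $F$.

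The gap is that this final identity is precisely where the substance of Lee's theorem lies, and your argument for it is only a sketch. The base case $|\tau|=d-1$ is fine (two facets sharing $\tau$ inherit opposite orientations on it and their contributions cancel), but the inductive step---``telescope across interior codimension-one faces and match residues along $\{A_F(v)=0\}$''---is not yet an argument: you have not specified what quantity is being telescoped, why the interior contributions cancel pairwise, or how the remaining boundary term becomes an instance of the same identity on $\mathrm{lk}_\Delta\tau$ (note that $P_F$ depends on the exponent vector $\mathbf{s}$, so the induction must also track how $\mathbf{s}$ restricts). The toric/global-residue alternative is likewise only named. So you have correctly isolated the crux of the proof but not carried it through.
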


\section{Equivalent anisotropy conditions}\label{sec:equivalence}
As shown in Theorem \ref{thm:PP}, if $\mathbb{F}$ is a field of characteristic $2$ and $\Delta$ is a $\mathbb{F}$-homology $(d-1)$-sphere with $m$ vertices, then $\kk(\Delta;\Theta)$ is anisotropic for the field of rational functions
\[\kk:=\mathbb{F}(a_{i,j}:1\leq i\leq d,\,1\leq j\leq m)\]
and the l.s.o.p. $\Theta=(\theta_i=\sum_{i=1}^m a_{i,j}x_j)_{i=1}^d$.
In fact, under the assumption of Theorem \ref{thm:PP} $\Delta$ is anisotropic over some smaller field extension of $\mathbb{F}$ than $\kk$, as we will see below.

Let 
\[\kk'=\mathbb{F}(a_{i,j}:1\leq i\leq d,\,d+1\leq j\leq m),\]
and denote by $A$ the $d\times (m-d)$ matrix $(a_{i,j})$. One easily sees that there is an l.s.o.p. $\Theta'$ for $\kk'[\Delta]$ such that $M_{\Theta'}=( I_d\mid A)$, where $I_d$ is the $d\times d$ identity matrix.

\begin{thm}\label{thm:aniso equiv}
	Suppose that $\Delta$ is a $\mathbb{F}$-homology ($\mathrm{char}\,\mathbb{F}$ is arbitrary) $(d-1)$-sphere with $m$ vertices. Let $\kk$, $\Theta$ and $\kk'$, $\Theta'$ be as above. 
	Then $\kk(\Delta;\Theta)$ is anisotropic if and only if $\kk'(\Delta;\Theta')$ is anisotropic.
\end{thm}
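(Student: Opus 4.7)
The plan is to reduce the generic l.s.o.p.\ $\Theta$ (with matrix of all $a_{i,j}$) to the row-reduced form $(I_d\mid\ast)$ via a suitable field automorphism, and then invoke that anisotropy is invariant under purely transcendental extensions of an infinite base field.

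Split $M_\Theta=(M_1\mid A)$ with $M_1=(a_{i,j})_{1\leq i,j\leq d}$ and $A=(a_{i,j})_{d+1\leq j\leq m}$. Since $M_1$ has algebraically independent entries, $\det M_1\neq 0$ in $\kk$, so the l.s.o.p.\ $\w\Theta:=M_1^{-1}\Theta$ (whose matrix is $(I_d\mid M_1^{-1}A)$) generates the same ideal as $\Theta$, whence $\kk(\Delta;\Theta)=\kk(\Delta;\w\Theta)$. Put $\kk_1:=\mathbb{F}(a_{i,j}:1\leq i,j\leq d)$, so that $\kk=\kk_1(a_{i,j}:j>d)$. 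The assignment $a_{i,j}\mapsto(M_1^{-1}A)_{i,j-d}$, for $j>d$, is a $\kk_1$-linear endomorphism of the $\kk_1$-span of $\{a_{i,j}:j>d\}$ given by the invertible matrix $M_1^{-1}$, hence extends to a $\kk_1$-algebra automorphism of the polynomial ring $\kk_1[a_{i,j}:j>d]$, and then to a field automorphism $\sigma\colon\kk\to\kk$. Extending $\sigma$ to $\kk[x_1,\ldots,x_m]$ by $\sigma(x_j)=x_j$ preserves the monomial ideal $I_\Delta$ and carries $\theta'_i$ to $\w\theta_i$, so $\sigma$ descends to a graded $\mathbb{F}$-algebra isomorphism
\[
\kk(\Delta;\Theta')\cong\kk(\Delta;\w\Theta)=\kk(\Delta;\Theta).
\]
Since anisotropy is purely a property of the underlying graded ring, it is preserved by this isomorphism.

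Because the coefficients of $\Theta'$ lie in $\kk'$, the scalar-extension identification $\kk(\Delta;\Theta')=\kk'(\Delta;\Theta')\otimes_{\kk'}\kk$ reduces the theorem to the following lemma: \emph{if $R$ is a graded $K$-algebra with $K$ infinite and each $R_i$ finite-dimensional over $K$, and $L=K(t_1,\ldots,t_n)$ is purely transcendental, then $R$ is anisotropic if and only if $R\otimes_KL$ is anisotropic.} The direction $(\Leftarrow)$ is immediate from the inclusion $R\hookrightarrow R\otimes_KL$. For $(\Rightarrow)$, suppose $u\in(R\otimes_KL)_i$ is nonzero with $u^2=0$; after clearing denominators, write $u=\sum_jp_j(t)v_j$ with $\{v_j\}$ a $K$-basis of $R_i$ and $p_j\in K[t_1,\ldots,t_n]$ not all zero. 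For any $t^0\in K^n$, the specialization $u(t^0)=\sum_jp_j(t^0)v_j\in R_i$ satisfies $u(t^0)^2=0$, hence $u(t^0)=0$ by anisotropy of $R$, forcing $p_j(t^0)=0$ for every $j$. As $K$ is infinite, each $p_j\equiv0$, a contradiction. Applying the lemma with $K=\kk'$ (infinite since $m\geq d+1$ gives $d(m-d)\geq 1$) and $L=\kk$ finishes the argument.

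The delicate point is that $\sigma$ is \emph{not} $\kk'$-linear: it mixes the generators $a_{i,j}$ of $\kk'$ (for $j>d$) with those of $\kk_1$ (for $j\leq d$). Consequently, one must refrain from regarding the $\sigma$-induced isomorphism as an isomorphism of $\kk$- or $\kk'$-algebras; it is only an isomorphism of graded $\mathbb{F}$-algebras. This is nevertheless sufficient, because the anisotropy condition involves only the zero element and the squaring operation in each graded piece and makes no reference to any scalar subfield.
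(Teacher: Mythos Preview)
Your proof is correct, and it is organized differently from the paper's in a way worth noting.

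For ``$\Rightarrow$'' both arguments row-reduce $M_\Theta$ to $(I_d\mid B)$ with $B=M_1^{-1}A$; the paper observes that the entries $b_{i,j}$ of $B$ are algebraically independent over $\mathbb{F}$ and hence $\kk'(\Delta;\Theta')\cong\mathbb{F}(b_{i,j})(\Delta;\Theta_0)\subset\kk(\Delta;\Theta)$, while you package the same field isomorphism as the restriction of your automorphism $\sigma$. These are the same idea in different clothing.

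For ``$\Leftarrow$'' the approaches genuinely diverge. The paper adjoins the $d^2$ variables $a_{i,j}$ ($j\leq d$) one at a time, and at each step uses a single specialization $a_i\mapsto 0$ or $1$ (together with the fact that a face-monomial basis over the smaller field remains a basis over the larger one) to push anisotropy up the tower. Your route is more conceptual: the automorphism $\sigma$ gives $\kk(\Delta;\Theta)\cong\kk(\Delta;\Theta')$ in one stroke, and the remaining content is the clean lemma that anisotropy of a finite-dimensional graded algebra over an infinite field is invariant under purely transcendental base change, proved by Zariski density. This is shorter and isolates the essential point. One practical advantage of the paper's version is that its step-by-step specialization is reused verbatim later (in the proofs of Proposition~\ref{prop:sus} and Proposition~\ref{prop:sed}) to justify anisotropy for l.s.o.p.'s whose first columns are set to specific constant vectors; your lemma yields the same conclusions, but one has to phrase the application slightly differently (first pass to $(I_d\mid A)$, then extend scalars and apply an automorphism carrying $I_d$ to the desired special columns).

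Your closing caveat about $\sigma$ not being $\kk'$-linear is well taken and exactly right: anisotropy is a statement about the graded ring, so an $\mathbb{F}$-algebra isomorphism suffices.
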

\begin{proof}
	``$\Rightarrow$".	There exists a matrix $N\in GL(d,\kk)$ such that $N M_\Theta=(I_d\mid B)$, where $B=(b_{i,j})$ ($1\leq i\leq d,\,d+1\leq j\leq m$) is a $d\times (m-d)$ matrix  with entries $b_{i,j}\in\kk$. Denote by $\Theta_0$ the l.s.o.p. corresponding to $(I_d\mid B)$. Clearly, the two ideals generated by $\Theta$ and $\Theta_0$ are the same. Let 
	\[\kk_0=\mathbb{F}(b_{i,j}:1\leq i\leq d,\,d+1\leq j\leq m).\]
	Then $\kk_0$ is a subfield of $\kk$. One easily sees that $b_{i,j}$ are algebraically independent elements over $\mathbb{F}$, so there is an isomorphism $\kk'\cong\kk_0$ given by $a_{i,j}\mapsto b_{i,j}$, and then an induced isomorphism $\kk'(\Delta;\Theta')\cong \kk_0(\Delta;\Theta_0)$. Since $\kk_0(\Delta;\Theta_0)\subset \kk(\Delta;\Theta_0)=\kk(\Delta;\Theta)$ 
	and $\kk(\Delta;\Theta)$ is anisotropic, any nonzero element 
	$u\in\kk_0(\Delta;\Theta_0)_i\cong \kk'(\Delta;\Theta')_i$ with $i\leq d/2$ satisfies $u^2\neq0$. Hence $\kk'(\Delta;\Theta')$ is anisotropic.
	
	``$\Leftarrow$". Pick an arbitrary order on the variables $a_{i,j}$ for $1\leq i\leq d$, $1\leq j\leq d$, and rewrite them as $a_1,a_2,\dots,a_{d^2}$. Let $\kk_0=\kk'$, and recursively define $\kk_i=\kk_{i-1}(a_i)$, i.e. the field of fractions of $\kk_{i-1}[a_i]$,  for $1\leq i\leq d^2$. Hence there is a sequence of field extension \[\kk'=\kk_0\subset\kk_1\subset\cdots\subset\kk_{d^2}=\kk.\]
	 Let $\Theta_0=\Theta'$ be the l.s.o.p. for $\kk_0[\Delta]$. For $1\leq i\leq d^2$, if $a_{i}=a_{j,k}$, then define $\Theta_i$ to be the l.s.o.p. for $\kk_i[\Delta]$ such that $M_{\Theta_i}$ is obtained from $M_{\Theta_{i-1}}$ by replacing the $(j,k)$-entry by $a_{j,k}$. 
	
	We will prove that $\kk_{i}(\Delta;\Theta_i)$ are all anisotropic for $0\leq i\leq d^2$ by induction on $i$. The base case $i=0$ is just the assumption. For the induction step, set $R_i=\kk_0[a_1,\dots,a_i]$, and denote by $\mathfrak{p}_i\subset R_i$ the prime ideal 
	\[\mathfrak{p}_i=\begin{cases}
		(a_i-1)&\text{if  $a_i=a_{k,k}$ for some $1\leq k\leq d$},\\
		(a_i)&\text{otherwise.}
	\end{cases}\] 
	Then there is a ring homomorphism $\eta_i:(R_i)_{\mathfrak{p}_i}\to \kk_{i-1}$, where $(R_i)_{\mathfrak{p}_i}\subset\kk_i$ denote the localization of $R_i$ at $\mathfrak{p}_i$, given by $\eta_i(a_j)=a_j$ for $1\leq j\leq i-1$, and 
	\[\eta_i(a_i)=\begin{cases}
		1&\text{if  $a_i=a_{k,k}$ for some $1\leq k\leq d$},\\
		0&\text{otherwise.}
	\end{cases}\]
	Clearly, $\eta_i$ induces a homomorphism $(R_i)_{\mathfrak{p}_i}(\Delta;\Theta_i)\to \kk_{i-1}(\Delta;\Theta_{i-1})$, which we also denoted by $\eta_i$.
	
	Given a nonzero element $\alpha\in\kk_{i}(\Delta;\Theta_i)_j$ with $j\leq d/2$, we claim that there exists a nonzero element $t\in\kk_i$ such that
	\[t\alpha\in(R_i)_{\mathfrak{p}_i}(\Delta;\Theta_i)\ \text{ and }\ 0\neq\eta_i(t\alpha)\in\kk_{i-1}(\Delta;\Theta_{i-1}).\]
	Assume this claim for the moment. Since $\kk_{i-1}(\Delta;\Theta_{i-1})$ is anisotropic by induction, $[\eta_i(t\alpha)]^2\neq0$. It follows that $t^2\alpha^2$ is not zero in $(R_i)_{\mathfrak{p}_i}(\Delta;\Theta_i)$, and then $0\neq\alpha^2\in\kk_{i}(\Delta;\Theta_i)$. So $\kk_{i}(\Delta;\Theta_i)$ is anisotropic, and the induction step is completed.
	
	It remains to prove the claim. Recall that $\kk_{i-1}(\Delta;\Theta_{i-1})$ is spanned by face monomials. Suppose that $\{\xx_{\sigma_1},\dots,\xx_{\sigma_s}\}$ is a basis of $\kk_{i-1}(\Delta;\Theta_{i-1})_j$. Then it is also a basis of $\kk_i(\Delta;\Theta_i)_j$. To see this, let $f=\sum_{r=1}^s l_r\xx_{\sigma_r}$ be a nontrivial $\kk_i$-linear combination of the $\xx_{\sigma_r}$. Then there is a power $t$ of $a_i$ (or $a_i-1$) such that $tl_r\in (R_i)_{\mathfrak{p}_i}$ for all $r$ and $\eta_i(tl_q)\neq 0$ for some $q$. Hence $tf\in (R_i)_{\mathfrak{p}_i}(\Delta;\Theta_i)$ and $\eta_i(tf)\neq 0$ in $\kk_{i-1}(\Delta;\Theta_{i-1})$. This implies that $f\neq 0$ in $\kk_i(\Delta;\Theta_i)$. Since $\Delta$ is a Cohen-Macaulay complex, $\dim_{\kk_i}\kk_i(\Delta;\Theta_i)_j=\dim_{\kk_{i-1}}\kk_{i-1}(\Delta;\Theta_{i-1})_j=h_j(\Delta)$. Therefore $\xx_{\sigma_1},\dots,\xx_{\sigma_s}$ forms a basis of $\kk_i(\Delta;\Theta_i)_j$. Hence, for the element $\alpha$ in the previous paragraph, we may assume $\alpha=f$, and take $t$ to be as above, then the claim is verified.
\end{proof}

Here is an application of Theorem \ref{thm:aniso equiv}.

\begin{prop}\label{prop:sus}
	Let $\Delta$ be a $\mathbb{F}$-homology $(2n-2)$-sphere on $[m]$. 
	If the suspension $S(\Delta)$ of $\Delta$ is generically anisotropic over $\mathbb{F}$, then $\Delta$ is also generically anisotropic over $\mathbb{F}$.
\end{prop}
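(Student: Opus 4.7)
The plan is to use Theorem~\ref{thm:aniso equiv} to replace both anisotropy statements by their ``reduced-form'' versions, realize the Artinian reduction of $\kk[S(\Delta)]$ (in reduced form) as $T/(\phi_1\phi_2)$ for a Cohen--Macaulay Gorenstein ring $T$ of Krull dimension $1$ in which $\phi_1,\phi_2$ are non-zero-divisors, and then transfer anisotropy from $T/(\phi_1\phi_2)$ down to $T/\phi_1\cong \kk(\Delta)$ by an algebraic trick exploiting the relation $\phi_1\phi_2=0$ inherited from $y_+y_-=0$.

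First I would apply Theorem~\ref{thm:aniso equiv} to $S(\Delta)$: the hypothesis becomes anisotropy of $\kk'(S(\Delta);\Theta')$ with $M_{\Theta'}=(I_{2n}\mid A^S)$ and $A^S$ generic. Choose the vertex ordering of $S(\Delta)$ to list the two suspension vertices $y_+,y_-$ first. Then the first two rows of $\Theta'$ read $\theta_1^S=y_+-\phi_1$ and $\theta_2^S=y_--\phi_2$ for two generic linear forms $\phi_1,\phi_2\in\kk'[\Delta]_1$, while $\theta_3^S,\dots,\theta_{2n}^S$ only involve vertices of $\Delta$ and form a partial l.s.o.p. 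Set $T=\kk'[\Delta]/(\theta_3^S,\dots,\theta_{2n}^S)$. Substituting $y_+=\phi_1$, $y_-=\phi_2$ into $y_+y_-=0$ yields
\[
\kk'(S(\Delta);\Theta')=T/(\phi_1\phi_2),\qquad T/\phi_1=\kk'\bigl(\Delta;(\theta_3^S,\dots,\theta_{2n}^S,\phi_1)\bigr).
\]
Cohen--Macaulayness of $\kk[\Delta]$ forces $T$ to be a Cohen--Macaulay Gorenstein ring of Krull dimension $1$, in which $\phi_1,\phi_2$ are both non-zero-divisors (each extends $\theta_3^S,\dots,\theta_{2n}^S$ to a full l.s.o.p.\ of $\kk[\Delta]$).

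The heart of the argument is the following lemma: anisotropy of $T/(\phi_1\phi_2)$ in degrees $j\le n$ forces anisotropy of $T/\phi_1$ in degrees $j\le n-1$. Suppose for contradiction some nonzero $v\in (T/\phi_1)_j$ with $j\le n-1$ has $v^2=0$. Lift $v$ to $\tilde v\in (T/(\phi_1\phi_2))_j$; then $\tilde v\ne 0$, and $v^2=0$ gives $\tilde v^2=\phi_1 a$ for some $a$. Consider $\tilde v\cdot\phi_2\in (T/(\phi_1\phi_2))_{j+1}$. If $\tilde v\phi_2=0$, then since $\phi_2$ is a non-zero-divisor on $T$, $\tilde v$ must be a multiple of $\phi_1$ in $T/(\phi_1\phi_2)$, i.e.\ $v=0$ in $T/\phi_1$, contradicting $v\ne 0$. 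Hence $\tilde v\phi_2$ is a nonzero element of degree $j+1\le n$. But $(\tilde v\phi_2)^2=\tilde v^2\phi_2^2=\phi_1 a\,\phi_2^2=0$ by $\phi_1\phi_2=0$, contradicting anisotropy of $T/(\phi_1\phi_2)$ at degree $j+1$.

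To close the loop, I would apply Theorem~\ref{thm:aniso equiv} in the reverse direction to $\Delta$. The l.s.o.p.\ $(\theta_3^S,\dots,\theta_{2n}^S,\phi_1)$ for $\kk[\Delta]$ can be row-reduced to a matrix of reduced form $(I_{2n-1}\mid A^\Delta)$; a short specialization argument (for example setting $a^S_{1,1}=1$ and $a^S_{i,1}=0$ for $i\ge 3$) shows that the entries of the resulting $A^\Delta$ are algebraically independent over $\mathbb{F}$, so Theorem~\ref{thm:aniso equiv} identifies anisotropy of $T/\phi_1$ with generic anisotropy of $\Delta$. The main obstacle is the degree-shift trick $v\mapsto \tilde v\phi_2$ in the middle step, which uses exactly the one extra degree of anisotropy enjoyed by $S(\Delta)$ over $\Delta$; a minor but necessary technical check is the algebraic independence of the entries of $A^\Delta$ after row reduction, needed to legitimately invoke Theorem~\ref{thm:aniso equiv} on the $\Delta$ side.
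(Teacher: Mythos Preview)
Your argument is correct and takes a genuinely different route from the paper's.

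The paper works geometrically: it writes $S(\Delta)=K\cup K'$ with $K=\{m+1\}*\Delta$, chooses $M_\Theta$ so that the suspension columns are $(I_2\mid 0)^T$, and uses the cone isomorphisms $\kk(\Delta;\Theta_0)\cong\kk(K;\Theta)$ and $\kk(\Delta;\Theta_0)_*\cong\kk(K,\Delta;\Theta)_{*+1}$ together with the canonical-module pairing of Proposition~\ref{prop:pairing} and Corollary~\ref{cor:exact}. For a nonzero $\alpha$, it shows $x_{m+1}\alpha\ne 0$ in $\kk(S(\Delta))$, deduces $(x_{m+1}\alpha)^2\ne 0$ from anisotropy, and then uses the pairing to pull this back to $\alpha^2\ne 0$. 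Your approach is purely ring-theoretic: the presentation $\kk'(S(\Delta);\Theta')\cong T/(\phi_1\phi_2)$ with $T$ one-dimensional Cohen--Macaulay lets you replace all of the relative/absolute machinery by the single observation that $\phi_2$ is a non-zero-divisor on $T$, and the contradiction comes out in two lines from $(\tilde v\phi_2)^2=\phi_1a\cdot\phi_2^2=0$. Both proofs exploit the one extra degree of anisotropy of $S(\Delta)$ via a degree shift (the paper by multiplying by $x_{m+1}$, you by multiplying by $\phi_2$), but yours bypasses Proposition~\ref{prop:pairing} and Corollary~\ref{cor:exact} entirely and is more self-contained. The paper's approach, in return, stays closer to the Stanley--Reisner picture and makes the role of the cone/suspension structure visible.

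Two small remarks on your write-up. First, the Gorenstein property of $T$ is not actually used; Cohen--Macaulayness (hence that $\phi_1,\phi_2$ are non-zero-divisors) suffices. Second, the algebraic-independence check at the end is correct but deserves one more line: with your ordering the l.s.o.p.\ matrix for $\Delta$ has the block form $\bigl(\begin{smallmatrix} I_{2n-2} & B\\ 0 & c^T\end{smallmatrix}\bigr)$ with the entries of $B$ and $c$ a subset of the independent variables $a_{i,j}$; the entries of $A^\Delta$ after row reduction are $A^\Delta_{r,k}=B_{r,k+1}-B_{r,1}c_{k+1}/c_1$ and $A^\Delta_{2n-1,k}=c_{k+1}/c_1$, and since $\mathbb{F}(B,c)=\mathbb{F}(B_{*,1},c_1,A^\Delta_{*,*})$ a transcendence-degree count (or your specialization $c_1=1$, $B_{*,1}=0$) gives the independence you need to invoke Theorem~\ref{thm:aniso equiv} on the $\Delta$ side.
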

\begin{proof}
	Write $S(\Delta)=K\cup K'$, where $K=\{m+1\}*\Delta$ and $K'=\{m+2\}*\Delta$. Let $\kk$ be the rational function field 
	\[\mathbb{F}(a_{i,j}:1\leq i\leq 2n,\,1\leq j\leq m),\]
	and let $\kk_0\subset\kk$ be the subfield 
	\[\mathbb{F}(a_{i,j}:2\leq i\leq 2n,\,1\leq j\leq m).\]
	Choose an l.s.o.p. $\Theta=(\theta_1,\theta_2,\dots,\theta_{2n})$ for $\kk[S(\Delta)]$ such that $M_\Theta=(A\mid\boldsymbol{\lambda}_{m+1},\boldsymbol{\lambda}_{m+2})$, where $A=(a_{i,j})$, and $(\boldsymbol{\lambda}_{m+1},\boldsymbol{\lambda}_{m+2})=(I_2\mid 0)^T$.
	Let $\Theta_0=(\theta_2,\theta_3,\dots,\theta_{2n})$. Clearly, $\Theta_0$ restricted to $\Delta$ is an l.s.o.p. for $\kk[\Delta]$.
	It is known that there are two isomorphisms:
	\begin{gather*}
		\kk(\Delta;\Theta_0)\cong\kk(K;\Theta),\ x_i\mapsto x_i\text{ for }1\leq i\leq m,\\
		\kk(\Delta;\Theta_0)_*\cong\kk(K,\Delta;\Theta)_{*+1},\ \alpha\mapsto x_{m+1}\alpha
	\end{gather*}
	(see e.g. \cite[Lemma 3.2 and 3.3]{A18}). Hence for a nonzero element $\alpha\in\kk(\Delta;\Theta_0)$, we have $0\neq x_{m+1}\alpha\in\kk(K,\Delta;\Theta)$. 
	
	Assume  $S(\Delta)$ is generically anisotropic over $\mathbb{F}$. Then $\kk(S(\Delta);\Theta)$ is anisotropic by the proof of Theorem \ref{thm:aniso equiv}. For any nonzero element $\alpha\in\kk_0(\Delta;\Theta_0)_i\subset \kk(\Delta;\Theta_0)_i$, $i\leq n-1$, the second isomorphism above shows that $0\neq x_{m+1}\alpha\in\kk(K,\Delta;\Theta)_{i+1}$.  Hence we have  
	$0\neq(x_{m+1}\alpha)^2\in\kk(K,\Delta;\Theta)$, since $\kk(K,\Delta;\Theta)=\kk(S(\Delta),K';\Theta)\subset\kk(S(\Delta);\Theta)$ by Corollary \ref{cor:exact} and $\kk(S(\Delta);\Theta)$ is anisotropic. 
	This means that $x_{m+1}\alpha^2$ is not zero in $\kk(K;\Theta)$ since by Proposition \ref{prop:pairing} there exists $\beta\in \kk(K;\Theta)$ such that $(x_{m+1}\alpha)^2\beta=x_{m+1}\alpha^2\cdot x_{m+1}\beta\neq 0$ in $\kk(K,\Delta;\Theta)_{2n}$, and we can think of $x_{m+1}\alpha^2\in \kk(K;\Theta)$ and $x_{m+1}\beta\in \kk(K,\Delta;\Theta)$ in the pairing in Proposition \ref{prop:pairing}.
	Then $0\neq\alpha^2\in\kk_0(\Delta;\Theta_0)\subset \kk(\Delta;\Theta_0)$ because of the first isomorphism above $\kk(K;\Theta)\cong\kk(\Delta;\Theta_0)$. So $\kk_0(\Delta;\Theta_0)$ is anisotropic. This is equivalent to saying that $\Delta$ is generically anisotropic over $\mathbb{F}$ by definition.
\end{proof}

\section{Proof of Theorem \ref{thm:sed}}\label{sec:sed}
In this section $\mathbb{F}$ denotes a field of arbitrary characteristic. 

Theorem \ref{thm:sed} is a consequence of the following two propositions.

\begin{prop}\label{prop:sed sus}
	If $\Delta$ is an s.e.d. $(d-1)$-sphere, then the suspension $S(\Delta)$ is an s.e.d. $d$-sphere. 
\end{prop}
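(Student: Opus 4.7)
The plan is to prove this by induction on a complexity measure of $\Delta$ compatible with the recursive definition of s.e.d.\ (for instance the pair $(\dim\Delta,|V(\Delta)|)$ ordered lexicographically, so that $\mathrm{lk}_\Delta\sigma$ and $\CC(\sigma,\Delta)$ are both strictly smaller than $\Delta$). The argument then follows the two cases of the definition.

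For the inductive step, assume $\Delta$ is s.e.d.\ via some edge $\sigma=\{i,j\}\in\Delta$ at which $\Delta$ satisfies the link condition and such that both $\mathrm{lk}_\Delta\sigma$ and $\CC(\sigma,\Delta)$ are s.e.d. I would reuse the same edge $\sigma$, now viewed in $S(\Delta)$. Since $S(-)=S^0*(-)$ is a join and $\sigma$ is disjoint from the two suspension vertices, a direct unpacking of the definition of join yields
\[
\mathrm{lk}_{S(\Delta)}\sigma=S(\mathrm{lk}_\Delta\sigma),\qquad \CC(\sigma,S(\Delta))=S(\CC(\sigma,\Delta)),
\]
and the identity $(S^0*X)\cap(S^0*Y)=S^0*(X\cap Y)$, applied to $X=\mathrm{lk}_\Delta\{i\}$ and $Y=\mathrm{lk}_\Delta\{j\}$, reduces the link condition for $S(\Delta)$ at $\sigma$ to the link condition for $\Delta$ at $\sigma$. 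By the inductive hypothesis applied to the strictly smaller s.e.d.\ complexes $\mathrm{lk}_\Delta\sigma$ and $\CC(\sigma,\Delta)$, their suspensions are s.e.d., which provides the required witnesses for $S(\Delta)$.

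For the base case $\Delta=\partial\Delta^d$, no edge inside $\partial\Delta^d$ works: if $\sigma=\{i,j\}\subset[d+1]$, the face $[d+1]\setminus\{i,j\}$ lies in $\mathrm{lk}_{\partial\Delta^d}\{i\}\cap\mathrm{lk}_{\partial\Delta^d}\{j\}$ but not in $\mathrm{lk}_{\partial\Delta^d}\sigma$, and suspending preserves this failure. I would instead pick $\sigma=\{m+1,v\}$, where $m+1$ is one of the suspension vertices and $v\in[d+1]$ is arbitrary. Direct computation gives $\mathrm{lk}_{S(\partial\Delta^d)}\{m+1\}=\partial\Delta^d$, $\mathrm{lk}_{S(\partial\Delta^d)}\{v\}=S(\partial\Delta^{d-1})$ on $[d+1]\setminus\{v\}$, and their intersection equals $\partial\Delta^{d-1}=\mathrm{lk}_{S(\partial\Delta^d)}\sigma$, so the link condition holds; the link of $\sigma$ is the boundary of a simplex and hence s.e.d. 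For the contraction, identifying $m+1$ with $v$ sends the single facet $\{m+1\}\cup([d+1]\setminus\{v\})$ of $\{m+1\}*\partial\Delta^d$ onto the full simplex $[d+1]$, while the remaining facets of $\{m+1\}*\partial\Delta^d$ collapse into faces already present, producing a complex whose facets are $[d+1]$ together with $\{m+2\}\cup([d+1]\setminus\{k\})$ for $k\in[d+1]$; these are precisely the $d+2$ facets of $\partial\Delta^{d+1}$ on $[d+1]\cup\{m+2\}$, so $\CC(\sigma,S(\partial\Delta^d))=\partial\Delta^{d+1}$ is s.e.d.

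The main obstacle is really just bookkeeping, specifically the explicit identification $\CC(\{m+1,v\},S(\partial\Delta^d))=\partial\Delta^{d+1}$ in the base case. All the join identities used in the inductive step are formal and follow from the fact that $S(\Delta)=S^0*\Delta$ as a simplicial complex.
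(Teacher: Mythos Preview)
Your proof is correct and follows essentially the same approach as the paper: both arguments reuse an edge $\sigma\in\Delta$ in $S(\Delta)$, invoke the identities $\mathrm{lk}_{S(\Delta)}\sigma=S(\mathrm{lk}_\Delta\sigma)$ and $\CC(\sigma,S(\Delta))=S(\CC(\sigma,\Delta))$, verify that the link condition lifts to the suspension, and handle the terminal case by showing $\CC(\{m+1,v\},S(\partial\Delta^d))=\partial\Delta^{d+1}$. The only organizational difference is that you run a double induction on $(\dim\Delta,|V(\Delta)|)$, whereas the paper inducts on $d$ alone and phrases the vertex reduction as following a contraction sequence $\Delta=\Delta_0,\dots,\Delta_s=\partial\Gamma$; your packaging is slightly cleaner and your treatment of the simplex-boundary case is more explicit than the paper's one-line assertion.
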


\begin{prop}\label{prop:sed}
	Let $\Delta$ be a $\mathbb{F}$-homology $(2n-1)$-sphere on $[m]$ satisfying the link condition with respect to an edge $\sigma\in\Delta$. If $\mathrm{lk}_\Delta\sigma$ and $\CC(\sigma,\Delta)$ are generically anisotropic over $\mathbb{F}$ and $\mathrm{lk}_\Delta\sigma$ is strong Lefschetz over an infinite field $\kk$ ($\mathrm{char}\,\kk=\mathrm{char}\,\mathbb{F}$), then $\Delta$ is generically anisotropic over $\mathbb{F}$.
\end{prop}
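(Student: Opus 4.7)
My plan is to combine Theorem~\ref{thm:aniso equiv} with a short exact sequence relating the Artinian reductions of $\Delta$, $\CC=\CC(\sigma,\Delta)$, and $L=\mathrm{lk}_\Delta\sigma$, and then to prove the anisotropy of $\Delta$ by a two-case dichotomy on an arbitrary nonzero element of $\kk(\Delta)_j$. Assume without loss of generality that $\sigma=\{1,2\}$. By Theorem~\ref{thm:aniso equiv}, it suffices to show anisotropy of $\kk(\Delta;\Theta)$ for a conveniently chosen l.s.o.p.\ $\Theta$ over a suitable extension $\kk$ of $\mathbb{F}$. I would pick $\Theta$ generically, together with compatible induced l.s.o.p.s $\Theta_\CC$ for $\kk[\CC]$ and $\Theta_L$ for $\kk[L]$, so that by Theorem~\ref{thm:aniso equiv} and the hypotheses both $\kk(\CC;\Theta_\CC)$ and $\kk(L;\Theta_L)$ are anisotropic.

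The central technical input is a short exact sequence of graded $\kk(\Delta;\Theta)$-modules
\begin{equation*}
0 \to \kk(L;\Theta_L)(-1) \xrightarrow{\iota} \kk(\Delta;\Theta) \xrightarrow{\pi} \kk(\CC;\Theta_\CC) \to 0,
\end{equation*}
in which $\pi$ is a surjective ring homomorphism and $\iota$ realizes $\kk(L;\Theta_L)$, shifted by one degree, as an ideal of $\kk(\Delta;\Theta)$ via multiplication by the class of $x_1$. Concretely, $\iota$ satisfies the product formula $\iota(\alpha)\iota(\beta)=\iota(\xi\cdot\alpha\beta)$, where $\xi\in\kk(L;\Theta_L)_1$ is the linear form encoding the action of $x_1$ on the embedded copy of $\kk(L)$; by the genericity of $\Theta$ and the Lefschetz hypothesis on $L$ (combined with the Zariski-openness of Lefschetz elements), $\xi$ may be arranged to be a strong Lefschetz element of $\kk(L;\Theta_L)$. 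The Hilbert-series identity $h_i(\Delta)=h_i(\CC)+h_{i-1}(L)$, a combinatorial consequence of the link condition, matches the dimensions in every degree.

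With the SES in hand, anisotropy of $\Delta$ follows by dichotomy on $0\neq u\in\kk(\Delta;\Theta)_j$ with $j\leq n-1$. If $\pi(u)\neq 0$: by anisotropy of $\CC$, $\pi(u^2)=\pi(u)^2\neq 0$, so $u^2\neq 0$. If $\pi(u)=0$: write $u=\iota(w)$ with $0\neq w\in\kk(L)_{j-1}$, so that $u^2=\iota(\xi\cdot w^2)$; since $j-1\leq n-2$ and $\xi$ is a strong Lefschetz element of $\kk(L)$, multiplication by $\xi$ is injective on $\kk(L)_{j-1}$, hence $\xi w\neq 0\in\kk(L)_j$; since $2j\leq 2n-2=\dim L+1$, anisotropy of $L$ applied to $\xi w$ gives $(\xi w)^2=\xi^2 w^2\neq 0\in\kk(L)_{2j}$; hence $\xi w^2\neq 0$ and $u^2\neq 0$.

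The principal obstacle is constructing the short exact sequence. The naive contraction map $\kk[\Delta]\to\kk[\CC]$ defined by $x_1\mapsto x_2$ fails to be a ring homomorphism, because some non-faces of $\Delta$ through vertex~$1$ correspond to bona fide faces of $\CC$. The link condition is the precise combinatorial hypothesis needed to correct this failure, by quotienting $\kk[\Delta]$ by an additional ideal supported on $L$ and identifying the resulting kernel with a twisted copy of $\kk[L]$. Making this precise on the Artinian reductions, and verifying the product formula $\iota(\alpha)\iota(\beta)=\iota(\xi\alpha\beta)$, is the substantive technical work; this likely requires explicit basis computations via Lee's formula (Theorem~\ref{thm:Lee}). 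A secondary but essential point is coordinating the choice of $\Theta$ so that the induced $\xi$ is a strong Lefschetz element of $\kk(L;\Theta_L)$; this is where the full strength of the strong Lefschetz hypothesis on $L$ enters, beyond the anisotropy of $L$.
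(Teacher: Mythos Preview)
Your central short exact sequence cannot exist with $\pi$ a ring homomorphism. Both $\kk(\Delta;\Theta)$ and $\kk(\CC;\Theta_\CC)$ are Artinian Gorenstein algebras with one-dimensional socle in degree $2n$. In any such algebra every nonzero ideal contains the socle: if $0\neq x\in J_i$, Poincar\'e duality supplies $y\in\kk(\Delta)_{2n-i}$ with $0\neq xy\in J_{2n}$. Hence if $\ker\pi\neq 0$ then $\pi$ annihilates $\kk(\Delta)_{2n}$, forcing $\kk(\CC)_{2n}=0$, which contradicts $h_{2n}(\CC)=1$. So there is no proper ring quotient of $\kk(\Delta)$ with the same socle degree, and the dichotomy built on $\pi$ collapses. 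A symptom of this already appears in your argument: you treat only $j\leq n-1$, but anisotropy must be verified through $j=n$ (since $d=2n$). In that middle degree, if $u=\iota(w)$ with $w\in\kk(L)_{n-1}$, your product formula gives $u^2=\iota(\xi w^2)$ with $\xi w^2\in\kk(L)_{2n-1}=0$, so $u^2=0$; this is precisely the socle obstruction resurfacing.

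The paper circumvents this by a degeneration rather than a quotient. It works over $\kk=\kk_0(t)$, setting the column of vertex $2$ to $(t,1,0,\dots,0)^T$, so that as $t\to\infty$ the columns of vertices $1$ and $2$ become parallel and $\Delta$ ``degenerates'' toward $\CC(\sigma,\Delta)$. Lemma~\ref{lem:basis} produces a basis $\Aa\cup\BB$ of $\kk(\Delta)_n$ with $\Aa$ coming from the link and $\BB$ from faces avoiding the edge; Lee's formula (Theorem~\ref{thm:Lee}) then shows that, as rational functions of $t$, the values $\Psi_\Delta(\xx_{\sigma_i}\xx_{\sigma_j})$ have $t$-degree $1$ with leading coefficient controlled by $\Psi_{\mathrm{lk}_\Delta\sigma}$, while $\Psi_\Delta(\xx_{\tau_i}\xx_{\tau_j})$ have $t$-degree $\leq 0$ with leading coefficient controlled by $\Psi_{\CC(\sigma,\Delta)}$. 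Anisotropy of the link and of the contraction then forces $\Psi_\Delta(\alpha^2)\neq 0$ by a comparison of $t$-degrees (Lemma~\ref{lem:leading}). In short, the paper replaces your impossible ring surjection by an asymptotic comparison of the quadratic form along a one-parameter family of l.s.o.p.'s.
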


Assuming Propositions \ref{prop:sed sus} and \ref{prop:sed}, we  prove Theorem \ref{thm:sed} as follows. First we consider the odd dimensional case. 	Let $\Delta$ be an s.e.d. $(2n-1)$-sphere. The generic anisotropy of $\Delta$ follows from Proposition \ref{prop:sed} by induction on both the dimension and the vertex number of $\Delta$. Note that if $\Delta$ is the boundary of a simplex, then $\Delta$ is generically anisotropic since $\kk(\Delta)=\kk[x]/(x^{2n+1})$ for any field $\kk$, and that $\mathrm{lk}_\Delta\sigma$ is an s.e.d. $(2n-3)$-sphere. Also, recall that simplicial $1$-spheres are generically anisotropic by \cite{PP23}, and that s.e.d. spheres are strong Lefschetz over any infinite field by \cite{Murai10}.

The even dimensional case can be deduced from Propositions \ref{prop:sus} and  \ref{prop:sed sus}. 

\begin{proof}[Proof of Proposition \ref{prop:sed sus}]
	We use induction on $d$. If $d=1$, the statement clearly holds. For the induction step, note that there are the following two easily verified facts:
	\begin{enumerate}[(a)]
		\item\label{fact:1} $\CC(\sigma,S(\Delta))=S[\CC(\sigma,\Delta)]$.
		\item\label{fact:2} If $\Delta$ satisfies the link condition with respect to an edge $\sigma\in \Delta$, then $S(\Delta)$ also satisfies the link condition with respect to $\sigma$.
		\end{enumerate}  
	 So if there is a sequence of simplicial $(d-1)$-spheres: \[\Delta=\Delta_0,\,\Delta_1,\dots,\Delta_s=\partial\Gamma,\ \Gamma\text{ a $d$-simplex},\]
	 where $\Delta_{i+1}=\CC(\sigma_i,\Delta_{i})$ for some edge $\sigma_i\in\Delta_i$  such that $\Delta_i$ satisfies the link condition with respect to $\sigma_i$ and $\mathrm{lk}_{\Delta_i}\sigma_i$ is an s.e.d. sphere,
   then $S(\Delta_0),\dots,S(\Delta_s)$ is a sequence of simplicial $d$-spheres satisfying the same conditions by the above facts \eqref{fact:1}, \eqref{fact:2} and the induction hypothesis for $\mathrm{lk}_{S(\Delta_i)}\sigma_i=S(\mathrm{lk}_{\Delta_i}\sigma_i)$, the suspension of an s.e.d. $(d-3)$-sphere. Then the induction step is completed by using the fact that  $\CC(\sigma,S(\partial\Gamma))$ is the boundary complex of a $(d+1)$-simplex  for any edge $\sigma\in S(\partial\Gamma)\setminus\partial\Gamma$.
\end{proof}

The proof of Proposition \ref{prop:sed} needs the following 
\begin{lem}\label{lem:basis}
	Let $\Delta$ be a $\kk$-homology $(2n-1)$-sphere, and suppose that $\sigma=\{u,v\}$ is an edge of $\Delta$ such that $\mathrm{lk}_\Delta\sigma$ is strong Lefschetz  over $\kk$. Let $\Theta=\{\theta_1,\dots,\theta_{2n}\}$ be a generic l.s.o.p. for $\kk[\Delta]$ such that the submatrix $(\boldsymbol{\lambda}_u,\boldsymbol{\lambda}_v)$ of $M_\Theta$ has the form $\begin{pmatrix}
		T\\
		0
	\end{pmatrix}$, where $T\in GL(\kk,2)$ is upper triangular, and let $\Theta_0=\{\theta_3,\dots,\theta_{2n}\}$. 
	Assume that $\{\rho_1,\dots,\rho_r\}$, $\rho_i\in\mathrm{lk}_\Delta\sigma$, is a basis for $\kk(\mathrm{lk}_\Delta\sigma;\Theta_0)_{n-1}$.  Then $\kk(\Delta;\Theta)_n$ has a basis of the form:
	\[\Aa\cup\BB,\ \Aa=\{\sigma_1,\dots,\sigma_r\},\ \BB=\{\tau_1,\dots,\tau_s\},\]
	where $\sigma_i=\{u\}\cup\rho_i$ for $1\leq i\leq r$ and $\tau_i\in\Delta\setminus\mathrm{st}_\Delta\sigma$, $v\not\in\tau_i$ for $1\leq i\leq s$. 
\end{lem}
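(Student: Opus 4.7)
The plan is to prove the lemma by combining an injective ``multiplication-by-$\xx_\sigma$'' map from $\kk(\mathrm{lk}_\Delta\sigma;\Theta_0)$ into $\kk(\Delta;\Theta)$ (yielding linear independence of $\Aa$) with a spanning argument based on the exact sequence of Corollary~\ref{cor:exact} applied to the star of $\sigma$ and the strong Lefschetz property of $\mathrm{lk}_\Delta\sigma$.

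First, I would define the graded $\kk$-linear map $\psi\colon\kk(\mathrm{lk}_\Delta\sigma;\Theta_0)\to\kk(\Delta;\Theta)$, $\alpha\mapsto\xx_\sigma\cdot\alpha$, shifting degree by $2$, and show it is well-defined and injective. Well-definedness rests on two facts in $\kk[\Delta]$: the Stanley--Reisner relations of $\mathrm{lk}_\Delta\sigma$ not already in $I_\Delta$ are generated by $\xx_\tau$ with $\tau\cup\sigma\notin\Delta$, and for such $\tau$ one has $\xx_\sigma\cdot\xx_\tau=\xx_{\sigma\cup\tau}=0$; and $\xx_\sigma\cdot\theta_k$ lies in $(\Theta)\kk[\Delta]$ for every $\theta_k\in\Theta_0$. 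For injectivity, given nonzero $\alpha\in\kk(\mathrm{lk}_\Delta\sigma;\Theta_0)_{n-1}$, Poincaré duality on the $(2n-3)$-sphere $\mathrm{lk}_\Delta\sigma$ at its middle degree $n-1$ provides $\gamma\in\kk(\mathrm{lk}_\Delta\sigma;\Theta_0)_{n-1}$ with $\alpha\gamma=c\cdot\xx_F$ for some facet $F$ of $\mathrm{lk}_\Delta\sigma$ and $c\neq 0$; then $\xx_\sigma\cdot\alpha\gamma=c\cdot\xx_{\sigma\cup F}$ is a nonzero facet of $\Delta$ in the one-dimensional top degree $\kk(\Delta;\Theta)_{2n}$, forcing $\xx_\sigma\alpha\neq 0$. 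The linear independence of $\Aa=\{x_u\xx_{\rho_i}\}$ now follows immediately: if $\sum a_i x_u\xx_{\rho_i}=0$, multiplying by $x_v$ yields $\xx_\sigma\cdot\bigl(\sum a_i\xx_{\rho_i}\bigr)=0$, so by injectivity of $\psi$ and the basis property of the $\rho_i$, every $a_i=0$.

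For spanning, I would use the exact sequence $0\to\kk(\Delta,\mathrm{st}_\Delta\sigma)_n\to\kk(\Delta;\Theta)_n\to\kk(\mathrm{st}_\Delta\sigma)_n\to 0$ from Corollary~\ref{cor:exact}, with the restricted l.s.o.p.\ on $\mathrm{st}_\Delta\sigma$. The upper-triangular form of $T$ allows solving $x_v$ and then $x_u$ from the first two restricted forms as $\kk$-linear combinations of $\{x_j:j\in V(\mathrm{lk}_\Delta\sigma)\}$, yielding a graded isomorphism $\kk(\mathrm{st}_\Delta\sigma)\cong\kk(\mathrm{lk}_\Delta\sigma;\Theta_0)$ under which $x_u$ becomes a generic linear form $\omega\in\kk[\mathrm{lk}_\Delta\sigma]_1$. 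The weak-Lefschetz consequence of the strong Lefschetz property for $\mathrm{lk}_\Delta\sigma$ then implies that $\omega\cdot\colon\kk(\mathrm{lk}_\Delta\sigma;\Theta_0)_{n-1}\to\kk(\mathrm{lk}_\Delta\sigma;\Theta_0)_n$ is surjective, so the projections of the elements of $\Aa$ span $\kk(\mathrm{st}_\Delta\sigma)_n$. A complementary reduction, using $\theta_2\cdot f=0$ for $f\in I_{\Delta\setminus\mathrm{st}_\Delta\sigma}$ to substitute $x_v\mapsto -T_{22}^{-1}\bar\theta_2$ (where $\bar\theta_2=\theta_2-T_{22}x_v$), shows that the face monomials $\xx_\tau$ with $\tau\in\Delta\setminus\mathrm{st}_\Delta\sigma$ and $v\notin\tau$ span $\kk(\Delta,\mathrm{st}_\Delta\sigma)_n$; a maximal linearly independent subset $\BB$ of these extends $\Aa$ to the desired basis of size $h_n(\Delta)$.

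The main technical obstacle is this last spanning step: showing that the $v$-free face monomials span $\kk(\Delta,\mathrm{st}_\Delta\sigma)_n$. Applying the substitution $x_v\mapsto -T_{22}^{-1}\bar\theta_2$ to a face monomial $x_v\xx_{\tau'}\in I_{\Delta\setminus\mathrm{st}_\Delta\sigma}$ eliminates explicit $v$-factors but produces non-square-free terms $x_j^2\xx_{\tau'\setminus j}$ for $j\in\tau'$, which must be iteratively reduced to face monomials using the relations $\theta_3,\ldots,\theta_{2n}$ while remaining inside the ideal $I_{\Delta\setminus\mathrm{st}_\Delta\sigma}$ and without reintroducing $v$; establishing termination requires a careful inductive argument on a complexity measure (e.g.\ lexicographic order on exponent sequences) for these polynomial reductions.
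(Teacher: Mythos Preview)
Your approach is correct but takes a genuinely different route from the paper's. You decompose $\kk(\Delta)_n$ via the exact sequence for $\mathrm{st}_\Delta\sigma$ and use strong Lefschetz to make the projections of $\Aa$ span $\kk(\mathrm{st}_\Delta\sigma)_n$; the paper instead decomposes first via $\mathrm{st}_\Delta\{v\}$ (so the complement $\BB'\subset\Delta\setminus\mathrm{st}_\Delta\{v\}$ is automatically $v$-free and outside $\mathrm{st}_\Delta\sigma$), then passes to $\mathrm{lk}_\Delta\{v\}$ and applies a second exact sequence relative to the ball $D=\overline{\mathrm{lk}_\Delta\{v\}\setminus\mathrm{st}_\Delta\sigma}$. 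There $\Aa$ is directly a basis of $\kk(\mathrm{lk}_\Delta\{v\},D)_n$ via the isomorphism $\cdot\,x_u$, and strong Lefschetz is used to force $\kk(D,\partial D)_n\twoheadrightarrow\kk(D)_n$, yielding $\BB''\subset D\setminus\partial D$. So the two proofs invoke the Lefschetz hypothesis at opposite ends of the construction. The paper's two-step filtration is more modular and sidesteps your ``main technical obstacle'' entirely, whereas your route is a single exact sequence plus a concrete monomial reduction.

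That reduction is in fact much easier than you indicate, and you should replace the vague ``iterative reduction with a complexity measure'' by a one-step linear-algebra argument. Write $\mu=\{v\}\cup\tau'$ with $\tau'\cup\sigma\notin\Delta$ and substitute $x_v=-T_{22}^{-1}\sum_{l\neq u,v}a_{2,l}x_l$. For $l\notin\tau'$ the term $x_l\xx_{\tau'}$ is either zero or the face monomial of $\{l\}\cup\tau'$, which is $v$-free and lies in $\Delta\setminus\mathrm{st}_\Delta\sigma$ since $\{l\}\cup\tau'\cup\sigma\supset\tau'\cup\sigma\notin\Delta$. For the $n-1$ ``bad'' terms with $l\in\tau'$, use the $2n-2$ relations $\theta_k\cdot\xx_{\tau'}=0$ for $k\geq 3$: because $\theta_3,\dots,\theta_{2n}$ involve neither $x_u$ nor $x_v$, and the $(2n-2)\times(n-1)$ coefficient block $(a_{k,j})_{k\geq3,\,j\in\tau'}$ has full column rank for generic $\Theta$, you can solve for each $x_j\xx_{\tau'}$ ($j\in\tau'$) as a $\kk$-combination of $x_l\xx_{\tau'}$ with $l\notin\tau'\cup\{u,v\}$. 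No recursion is needed, and the resulting face monomials remain in $\Delta\setminus\mathrm{st}_\Delta\sigma$ for the same superset reason.
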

\begin{proof}
 By Corollary \ref{cor:exact} there is a short exact sequence
	\[0\to \kk(\Delta,\mathrm{st}_\Delta\{v\})\to\kk(\Delta)\to\kk(\mathrm{st}_\Delta\{v\})\to 0,\]
	which shows that $\kk(\Delta)_n$ has a basis of the form: 
	$\Aa'\cup\BB'$, where $\Aa'\subset \mathrm{st}_\Delta\{v\}$ and $\BB'\subset \Delta\setminus\mathrm{st}_\Delta\{v\}$. We will show that $\Aa'$ can be chosen such that $\Aa'=\Aa\cup \BB''$, where elements in $\BB''$ satisfy the same condition for the ones in $\BB$. The Lemma will follow by taking $\BB=\BB'\cup\BB''$.
	
Assume that $T=\begin{pmatrix}
	a&b\\
	0&c
\end{pmatrix}$. Let $\omega=\theta_1-\frac{b}{c}\cdot\theta_2$, and let $\Theta'=\{\omega,\theta_3,\dots,\theta_{2n}\}$. Then $\Theta'$ is an l.s.o.p. for $\kk[\mathrm{lk}_\Delta\{v\}]$ since $\Theta$ is generic. As we have seen in the proof of Propositon \ref{prop:sus}, the natural inclusion  $\kk(\mathrm{lk}_\Delta\{v\};\Theta')\to\kk(\mathrm{st}_\Delta\{v\};\Theta)$ is an isomorphism. 
	So a basis for $\kk(\mathrm{lk}_\Delta\{v\};\Theta')_n$ is also a basis for $\kk(\mathrm{st}_\Delta\{v\};\Theta)_n$.
	
	Let $D$ be the closure of $\mathrm{lk}_\Delta\{v\}\setminus \mathrm{st}_\Delta\sigma$. Then $D$ is a $\kk$-homology $(2n-2)$-ball with $\partial D=\mathrm{lk}_\Delta\sigma$. Applying Corollary \ref{cor:exact} again we get a short exact sequence:
	\[0\to \kk(\mathrm{lk}_\Delta\{v\},D)\to\kk(\mathrm{lk}_\Delta\{v\})\to\kk(D)\to 0.\]
	Note that $\kk(\mathrm{lk}_\Delta\{v\},D)\cong \kk(\{u\}*\mathrm{lk}_\Delta\sigma,\mathrm{lk}_\Delta\sigma)$, and recall that there is an isomorphisms:
	 \[\cdot x_u:\kk(\mathrm{lk}_\Delta\sigma;\Theta_0)\to\kk(\{u\}*\mathrm{lk}_\Delta\sigma,\mathrm{lk}_\Delta\sigma;\Theta').\]
	 Hence $\Aa$ is a basis for $\kk(\mathrm{lk}_\Delta\{v\},D;\Theta')_n$. 
	 
	 To get the desired basis $\Aa'$ for $\kk(\mathrm{lk}_\Delta\{v\};\Theta')_n$, we will give a basis $\BB''\subset D\setminus\partial D$ for $\kk(D;\Theta')_n$. Consider another exact sequence:
	 \[\kk(D,\partial D;\Theta')\to\kk(D;\Theta')\to\kk(\partial D;\Theta_0)/(\omega)\to 0.\]
	 Since $\partial D=\mathrm{lk}_\Delta\sigma$ is strong Lefschetz over $\kk$ and $\Theta$ is generic, we may assume that $\omega$ is a strong Lefschetz element for $\kk(\partial D;\Theta_0)$. It follows that $(\kk(\partial D;\Theta_0)/(\omega))_n=0$, and so $\kk(D,\partial D;\Theta')_n\to\kk(D;\Theta')_n$ is a surjection. Thus a basis $\BB''$ of $\kk(D;\Theta')_n$ can be taken from $D\setminus\partial D$.
	 We conclude that $\Aa'=\Aa\cup \BB''$ is the desired basis for $\kk(\mathrm{st}_\Delta\{v\};\Theta)_n$, and the proof is finished.
 \end{proof}
 
 Before going into the proof of Proposition \ref{prop:sed}, we state an easy result about rational functions without proof.
 Let $\kk$ be a field, and let $\kk(t)$ be the  field of rational functions over $\kk$ with one variable $t$. For a nonzero element $\phi=f/g\in\kk(t)$ with $f,g\in\kk[t]$,
 define the \emph{degree} of $\phi$ by $\deg(\phi)=\deg(f)-\deg(g)$, and define the \emph{leading coefficient} of $\phi$ as $L(\phi)=L(f)/L(g)$, where $\deg(h)$ and $L(h)$ are the degree and leading coefficient of the polynomial $h\in\kk[t]$ respectively. Moreover, we assume $\deg(0)=-\infty$ and $L(0)=0$ in $\kk(t)$.
 \begin{lem}\label{lem:leading}
 	Let $\kk(t)$ be as above. Then for a nonzero element $\alpha=\sum_{i\in I}\phi_i$ with $\phi_i\in\kk(x)$, we have 
 	\[\deg(\alpha)\leq M:=\max\{\deg(\phi_i):i\in I\},\]
 	where equality holds if and only if $\sum_{\deg(\phi_i)=M}L(\phi_i)\neq0$.
 \end{lem}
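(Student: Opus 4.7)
The plan is to clear denominators and reduce the statement to the analogous (and elementary) fact for polynomials. Concretely, I would write $\phi_i = f_i/g$ for polynomials $f_i, g \in \kk[t]$ sharing a single common denominator $g$, which is always possible by taking $g$ to be the product of the individual denominators of the $\phi_i$. Under this normalization every degree shifts uniformly: $\deg(\phi_i) = \deg(f_i) - \deg(g)$, so $M = M' - \deg(g)$ where $M' := \max_i \deg(f_i)$; likewise $\alpha = \bigl(\sum_i f_i\bigr)/g$ gives $\deg(\alpha) = \deg\bigl(\sum_i f_i\bigr) - \deg(g)$. Thus the claim reduces to a purely polynomial version: $\deg\bigl(\sum_i f_i\bigr) \leq M'$, with equality iff $\sum_{\deg(f_i) = M'} L(f_i) \neq 0$.

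Next, I would verify this polynomial version by direct expansion. Writing $f_i = \sum_j c_{i,j} t^j$, one has $\sum_i f_i = \sum_j \bigl(\sum_i c_{i,j}\bigr) t^j$. Every coefficient with $j > M'$ vanishes by the definition of $M'$, giving the inequality; the coefficient of $t^{M'}$ is exactly $\sum_{i:\,\deg(f_i) = M'} L(f_i)$, which pins down precisely when the maximal degree is attained.

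Finally, I would translate the leading-coefficient condition back from polynomials to rational functions. The leading coefficient of $\phi_i = f_i/g$ is $L(f_i)/L(g)$, and one checks by cross-multiplication that $L(\phi) = L(f)/L(g)$ is well-defined (independent of the chosen representative $f/g$) and agrees with the paper's definition. Consequently the common factor $1/L(g)$ can be pulled out of the sum, so $\sum_{\deg(\phi_i) = M} L(\phi_i) \neq 0$ is equivalent to $\sum_{\deg(f_i) = M'} L(f_i) \neq 0$. I do not expect any real obstacle: the content of the lemma is simply that $-\deg$ is the standard $\infty$-adic (non-Archimedean) valuation on $\kk(t)$, and the statement is the familiar cancellation-of-leading-terms analysis for sums of polynomials transported through a uniform denominator.
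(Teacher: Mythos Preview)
Your argument is correct. The paper does not prove this lemma at all: it is introduced as ``an easy result about rational functions'' and stated without proof, so there is nothing to compare against; your common-denominator reduction to the polynomial case is the standard way to verify it.
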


\begin{proof}[Proof of Proposition \ref{prop:sed}]
	Assume $\sigma=\{1,2\}$. Let $\kk_0$ be the field of rational functions 
	\[\mathbb{F}(a_{i,j}:1\leq i\leq 2n,\,3\leq j\leq m),\] and let $\kk=\kk_0(t)$ be the field of rational funcitons over $\kk_0$ with one variable $t$. 
	For an element $f\in\kk$, we use $\deg(f)$ and $L(f)$ to denote the degree and the leading coefficient of $f$ with respect to $t$ respectively (see the definitions before Lemma \ref{lem:leading}). 
	
	Let $\Theta$ be the l.s.o.p. for $\kk[\Delta]$ such that in the matrix $M_\Theta=\{\boldsymbol{\lambda}_1,\dots,\boldsymbol{\lambda}_m\}$, $\boldsymbol{\lambda}_1=(1,0,\dots,0)^T$, $\boldsymbol{\lambda}_2=(t,1,0,\dots 0)^T$ and $\boldsymbol{\lambda}_j=(a_{1,j},a_{2,j},\dots,a_{2n,j})^T$ for $3\leq j\leq m$. 
	By the proof of Theorem \ref{thm:aniso equiv}, we only need to show that $\kk(\Delta;\Theta)$ is anisotropy. Acturally, it suffices to prove that the quadratic form $\kk(\Delta;\Theta)_n\times \kk(\Delta;\Theta)_n\to \kk(\Delta;\Theta)_{2n}\cong\kk$ is anisotropic. 
	To see this, note that if $0\neq\alpha\in\kk(\Delta)_i$ for $i<n$, then there exits $\alpha'\in\kk(\Delta)_{n-i}$ such that $0\neq\alpha\alpha'\in\kk(\Delta)_n$ sicne $\kk(\Delta)$ is a Poincar\'e duality $\kk$-algebra generated by degree one elements. 
	
	Let $\Delta'=\CC(\sigma,\Delta)$ with vertex set $[m]\setminus\{2\}$, and let $M'$ be the matrix obtained from $M_\Theta$ by deleting the column $\boldsymbol{\lambda}_2$. Then the set $\Theta'$ of one forms associated to the row vectors of $M'$ is an l.s.o.p. for $\kk_0[\Delta']$. 
	Let $\Gamma=\mathrm{lk}_\Delta\sigma$, and let $\Theta_0$ be the l.s.o.p. for $\kk_0[\Gamma]$ such that the matrix $M_{\Theta_0}$ is obtained from $M_\Theta$ by deleting the first two rows and restricting to the vertices of $\Gamma$.
		
	Assume $\Delta'$ and $\mathrm{lk}_\Delta\sigma$ are generically anisotropic over $\mathbb{F}$. Then $\kk_0(\Delta';\Theta')$ and $\kk_0(\Gamma;\Theta_0)$ are anisotropic by the proof of Theorem \ref{thm:aniso equiv}. Assume further that $\Gamma$ is strong Lefschetz over $\kk$. Then by Lemma \ref{lem:basis}, $\kk(\Delta;\Theta)_n$ has a basis of the form: 
	\[\Aa\cup\BB,\ \Aa=\{\sigma_1,\dots,\sigma_r\},\ \BB=\{\tau_1,\dots,\tau_s\},\]
	where $\sigma_i=\{1\}\cup\rho_i$, $\rho_i\in \mathrm{lk}_\Delta\sigma$ for $1\leq i\leq r$ and $\tau_j\in\Delta\setminus\mathrm{st}_\Delta\sigma$, $2\not\in\tau_j$ for $1\leq j\leq s$.

	We have the following facts:
	\begin{enumerate}[(i)]
		\item\label{item:1} $\Psi_\Delta(\xx_{\sigma_i}\xx_{\sigma_j})=\pm\Psi_{\Gamma}(\xx_{\rho_i}\xx_{\rho_j})t+b$, with $b\in\kk_0$, for  $1\leq i,j\leq r$.  Here $\Psi_{\Gamma}$ is defined on $\kk_0(\Gamma;\Theta_0)_{2n-2}$. 
		
		\item\label{item:2} $\Psi_\Delta(\xx_{\sigma_i}\xx_{\tau_j})=\Psi_{\Delta'}(\xx_{\sigma_i}\xx_{\tau_j})$ for $1\leq i\leq r,\, 1\leq j\leq s$. Here $\Psi_{\Delta'}$ is defined on $\kk_0(\Delta';\Theta')_{2n}$.
		
		\item\label{item:3} 
		$\deg(\Psi_\Delta(\xx_{\tau_i}\xx_{\tau_j}))\leq 0$ for $1\leq i,j\leq s$, and the equality holds if and only if $\Psi_{\Delta'}(\xx_{\tau_i}\xx_{\tau_j})\neq 0$, in which case  $L(\Psi_\Delta(\xx_{\tau_i}\xx_{\tau_j}))=\Psi_{\Delta'}(\xx_{\tau_i}\xx_{\tau_j})$.
	\end{enumerate}

Assume these facts for the moment. For any element $\alpha\in \kk(\Delta;\Theta)_n$, write $\alpha=\alpha_1+\alpha_2$, where 
\[\alpha_1=\sum_{i=1}^rl_i\xx_{\sigma_i},\ l_i\in\kk,\ \text{ and }\ \alpha_2=\sum_{j=1}^sk_j\xx_{\tau_j},\ k_j\in\kk.\]
Now define
 \begin{gather*}
	m_1=\max\{\deg(l_i):1\leq i\leq r\},\ \ m_2=\max\{\deg(k_j):1\leq j\leq s\},\\ 
	\beta_1=\sum_{\deg(l_i)=m_1}L(l_i)\xx_{\sigma_i},\  \beta_2=\sum_{\deg(k_j)=m_2}L(k_j)\xx_{\tau_j}.
\end{gather*}
If $m_1\geq m_2$, then $\beta_1\neq0$. Since $\kk_0(\Gamma;\Theta_0)$ is anisotropic, we have $\Psi_\Delta(\beta_1^2)=ft+b$, for some $f,b\in\kk_0$ with $f\neq 0$, by fact (i). Thus facts (i)-(iii) together with Lemma \ref{lem:leading} imply that  
\[\deg(\Psi_\Delta(\alpha^2))=\deg(\Psi_\Delta(\alpha_1^2))=2m_1+1\neq -\infty.\] On the other hand, if $m_1<m_2$, then $\beta_2\neq 0$. Hence by fact (iii) and Lemma \ref{lem:leading}, $\deg(\Psi_\Delta(\beta_2^2))=0$ and $L(\Psi_\Delta(\beta_2^2))=\Psi_{\Delta'}(\beta_2^2)\neq 0$ because of the anisotropy of $\kk_0(\Delta';\Theta')$. Using facts (i)-(iii) and Lemma \ref{lem:leading} again, we get 
\[\deg(\Psi_\Delta(\alpha^2))=\deg(\Psi_\Delta(\alpha_2^2))=2m_2\neq -\infty.\]
In either case we have $\alpha^2\neq 0$, then the proposition follows.

Now we prove \eqref{item:1}-\eqref{item:3}. For \eqref{item:1}, notice that for a facet $F\in\mathrm{st}_\Delta\{1\}$, if $2\not\in F$ then $A_F(l)$ ($l\in F$) and $A_F\in\kk_0$ (we may take $\mathbf{a}=\{1,1,\dots,1\}^T$ in the definition of $A_F(l)$), and if $2\in F$, then for $G=F\setminus\{1,2\}\in\Gamma$, $U=\sigma_i\cup\sigma_j$,  $V=\rho_i\cup\rho_j$, 
\[\frac{\prod_{l\in\sigma_i\cap\sigma_j}A_F(l)}{A_F\prod_{l\in F\setminus U}A_F(l)}=\pm\frac{A_F(1)}{A_F(2)}\cdot\frac{\prod_{l\in\rho_i\cap\rho_j}A_G(l)}{A_G\prod_{l\in G\setminus V}A_G(l)},\]
where $A_G$, $A_G(l)$ are defined on $M_{\Theta_0}$. A straightforward computation shows that $\frac{A_F(1)}{A_F(2)}=-t+c$ for some $c\in\kk_0$, and then \eqref{item:1} follows from Theorem \ref{thm:Lee}. 

\eqref{item:2} is obvious by the construction of $\Aa$ and $\BB$. 

For \eqref{item:3}, let $W=\tau_i\cup\tau_j$. If $W\not\in\mathrm{st}_\Delta\{2\}$, then $\Psi_\Delta(\xx_{\tau_i}\xx_{\tau_j})=\Psi_{\Delta'}(\xx_{\tau_i}\xx_{\tau_j})$ and the statement follows. So we assume $W\in\mathrm{st}_\Delta\{2\}$. If $F\in \mathrm{st}_\Delta\{2\}$ is a facet containing $W$, then it corresponds to a facet $F'\in \mathrm{st}_{\Delta'}\{1\}$ also containing $W$. It is easy to see that $A_F(2)=A_{F'}(1)$, $A_F=tA_{F'}+f$, $A_F(l)=tA_{F'}(l)+f_l$ for $2\neq l\in F$, where $f$, $f_l\in \kk_0$. 
Let
\[\phi_F=\frac{\prod_{l\in\tau_i\cap\tau_j}A_F(l)}{A_F\prod_{l\in F\setminus W}A_F(l)}\in\kk.\]
Then $\deg(\phi_F)=0$ since $|F\setminus W|=|\tau_i\cap\tau_j|$ and $2\in F\setminus W$. Moreover, $L(\phi_F)=\phi_{F'}$, where $\phi_{F'}\in\kk_0$ is defined in the same way as $\phi_F$. Hence \eqref{item:3} follows from Theorem \ref{thm:Lee} and Lemma \ref{lem:leading}.
\end{proof}

	\bibliography{M-A}
	\bibliographystyle{amsplain}
\end{document}